\documentclass[letterpaper, 10 pt, conference]{ieeeconf}

\usepackage[noadjust]{cite}

                                
\pdfobjcompresslevel=0
\IEEEoverridecommandlockouts                              
\overrideIEEEmargins

\usepackage{amsmath, amssymb, graphicx}

\usepackage{amsthm}
\usepackage{pgfplots}
\usepackage{tikz}
\usepackage{bbm}
\usepackage{tikz-3dplot}
\usepackage[normalem]{ulem}
\usepackage{pgfplots}
\usetikzlibrary{arrows.meta}
\usetikzlibrary{patterns}

\usepackage{fancyhdr}
\usepackage{multirow, multicol}

\usepackage{enumitem}
\usepackage{tabu}
\pagestyle{fancy}
\newtheorem{assumption}{Assumption}

\newtheorem{definition}{Definition}
\newtheorem{theorem}{Theorem}

\newtheorem{lemma}{Lemma}
\newtheorem{example}{Example}
\newtheorem{corollary}{Corollary}
\newtheorem{remark}{Remark}
\newtheorem{claim}{Claim}
\usepackage{comment}
\usepackage{algorithm}
\usepackage{hyperref}
\usepackage{algpseudocode}
\usepackage{algorithm}
\usepackage{algpseudocode}
\usepackage{subcaption}
\usepackage{graphicx}
\newcommand{\vv}{\vert\vert}
\newcommand{\diam}{\text{diam}}
\newcommand{\cov}{\text{Cov}}
\newcommand{\B}{\mathbb{B}}

\newcommand{\W}{\mathbb W}

\allowdisplaybreaks

\linespread{0.99}


\title{\LARGE \bf On the Sample Complexity of Set Membership Estimation for Linear Systems with Disturbances Bounded by Convex Sets}


\author{Haonan Xu and Yingying Li
\thanks{Haonan Xu and Yingying Li are with the Department of Industrial and Enterprise Systems Engineering and the Coordinated Science Laboratory at the University of Illinois Urbana-Champaign. E-mail:  {\tt\small \{haonan9,yl101\}@illinois.edu}.}
}
\pdfminorversion=4

\begin{document}

\maketitle
\thispagestyle{empty}
\pagestyle{empty}

\begin{abstract} This paper revisits the  set membership identification for linear control systems and establishes its convergence rates under relaxed assumptions on (i)  the persistent excitation requirement and (ii)  the system disturbances. In particular, instead of assuming persistent excitation exactly, this paper adopts the block-martingale small-ball condition enabled by randomly perturbed control policies to establish the convergence rates of SME with high probability. Further, we relax the assumptions on the shape of the bounded disturbance set and the boundary-visiting condition. Our convergence rates hold for disturbances bounded by general convex sets, which bridges the gap between the previous convergence analysis  for general convex sets and the existing convergence rate analysis for $\ell_\infty$ balls.  Further,
we validate our convergence rates   by several numerical experiments.

This manuscript contains supplementary content in the Appendix.
\end{abstract}


\section{Introduction}
Recent years have witnessed a revived interest in system identification, especially its  non-asymptotic analysis through the lens of  statistical learning \cite{li2023non,simchowitz2018learning,simchowitz2020naive,abbasi2011regret,musavi2024identification}. For example, least square estimation (LSE), as a popular \textbf{point estimator}, has been studied extensively for its sample complexity of parameter estimation for linear dynamical systems $x_{t+1}=A^*x_t +B^* u_t+w_t$ with unknown $(A^*, B^*)$ \cite{li2023non,simchowitz2018learning,simchowitz2020naive,abbasi2011regret}.\footnote{Though LSE's convergence rates for linear regression have long been known,  its  rates for  control systems were developed more recently due to the complications from temporal  correlations in control systems  \cite{simchowitz2018learning,abbasi2011regret}.} This further inspires research on LSE-based learning-enabled control algorithms and their non-asymptotic analysis \cite{simchowitz2020naive,dean2018regret}. These analyses on LSE  reveal an important insight into the exploration-exploitation tradeoff for linear  systems: randomly perturbed control policies, i.e., $u_t=\pi_t(x_t)+\eta_t$ with i.i.d. non-degenerate noises $\eta_t$, can achieve optimal system identification convergence rate for arbitrary  policies $\pi_t(\cdot)$ \cite{li2023non,simchowitz2018learning} and can further guarantee optimal regret under properly designed policies $\pi_t(\cdot)$ \cite{simchowitz2020naive}.

Besides the \textbf{point estimator} LSE, set membership estimation (SME) is a widely used \textbf{set estimator}  in  control literature \cite{kosut1992set,bai1995membership,livstone1996asymptotic,lorenzen2019robust,fogel1982value}. SME considers bounded noises $w_t\in \W$ and leverages the set $\W$ to characterize the  uncertainty/membership set of $(A^*, B^*)$.  SME is popular among robust adaptive control as it can  effectively learn and reduce the  uncertainty sets for  robust controllers, e.g., robust adaptive model predictive control (RAMPC) \cite{lu2019robust,lorenzen2019robust}, robust adaptive control barrier functions \cite{lopez2020robust}, etc. It is observed empirically that SME's uncertainty sets tend to be much smaller than the theoretical confidence bounds of LSE \cite{li2024setmembership}.

Compared to LSE, SME and its non-asymptotic performance have received  less attention from the perspective of learning-based control, especially under randomly perturbed control policies. Most of the previous  SME literature has to assume a persistent excitation (PE) condition to establish  SME's convergence rates \cite{rao1990arma, bai1998convergence,akccay2004size,lu2019robust,fogel1982value}. Though the PE condition is crucial for sufficient exploration,   guaranteeing  PE  at every time step can be challenging when there are other control objectives \cite{gonzalez2014model}. For example,   RAMPC controllers have to be modified by constrained optimizations to guarantee PE  \cite{marafioti2014persistently, lu2019robust, lu2023robust}. Therefore, it is tempting to consider a much simpler framework to achieve sufficient exploration, i.e., randomly perturbed policies, and explore SME's convergence rates in this framework.

Recently, there are some efforts on understanding SME's convergence rates under randomly perturbed control inputs \cite{li2024setmembership,musavi2024identification}. This scenario turns out to be much  more challenging to analyze than the  PE case due to (i) the lack of closed-form expressions for the sizes/diameters of the membership sets and (ii) the correlation among $\{w_t\}_{t\geq 0}$ when conditioning on a stochastic PE condition established for the randomly perturbed policies. To address these challenges, 
\cite{li2024setmembership}  develops a  novel stopping-time-based analysis, which is generalized to  linearly parameterized nonlinear system  \cite{musavi2024identification}.

However, these papers \cite{li2024setmembership,musavi2024identification}  assume that the support set $\W$   is a perfect $\ell_\infty$ ball, 
which is much more restrictive than merely assuming convex $\W$ for  convergence analysis in  \cite{lu2019robust,lu2023robust}.  The shape of $\W$ is important because  SME may fail to converge under an outer-approximation of $\W$ \cite{nayeri1994interpretable,lu2019robust,li2024setmembership,bai1995membership}, thus, one cannot simply outer-approximate an arbitrary support set by an $\ell_\infty$ ball to apply the convergence rates in \cite{li2024setmembership,musavi2024identification}. For instance, consider a  multi-zone Heating, Ventilation, and Air Conditioning (HVAC)  system: the  heat disturbances in different zones can be  different yet remain correlated. For example, external heat disturbances, such as solar radiation, differ between zones with and without windows. Besides, the internal disturbances that depend on occupancy levels may have a near-constant summation across all zones when   the total population inside the building is constant over certain time \cite{zeng2020identification}. As a result, heat disturbances are  bounded by irregular polytopes, making an $\ell_\infty$-ball outer approximation potentially too loose to guarantee convergence. 

\noindent\textbf{Contributions.} Our major contribution  is establishing SME's  convergence rates under general convex and compact $\W$, which bridges the gap between the  convergence analysis in \cite{lu2019robust,lu2023robust} for  convex and compact sets and the  convergence rate analysis  in \cite{li2024setmembership,musavi2024identification} for $\ell_\infty$ balls. 

To establish the non-asymptotic bounds, we first adopt a standard assumption on the distribution of $w_t$ for SME's convergence guarantees in   \cite{lu2019robust,lu2023robust}. Then, we propose a relaxed assumption, which not only generalizes the applicability of our convergence rates but also enables improved  rates in certain cases. Compared with the convergence rates in \cite{li2024setmembership}, our bound depends on a constant $\xi$ that reflects the geometric property of $\W$ and the probability distribution of $w_t$. Finally, we use simulation examples to validate our theoretical guarantees. 

This work  lays foundation for future design and non-asymptotic analysis of SME-based learning-enabled control. 

\noindent\textbf{Related works. }
SME is widely used for parameter estimation (especially parameters' uncertainty set estimation) in system identification \cite{karimshoushtari2020design, deller1989set, fogel1979system,bertsekas1971control}, which is also the focus of this paper. In particular, this paper is related to the convergence and the convergence rate analysis of SME for stochastic linear regression and linear system identification problems, which enjoy extensive research under the persistent excitation (PE) assumption \cite{bai1995membership, bai1994least,akccay2004size,lu2019robust,bai1998convergence}. 

It is worth mentioning that SME  is also applicable  for deterministic and bounded $w_t$ \cite{kosut1992set,bai1995membership,livstone1996asymptotic}. This attracts  applications for SME when the noises do not have nice statistical properties \cite{lu2023robust,lorenzen2019robust}. Some asymptotic guarantees exist for the deterministic case, e.g., \cite{livstone1996asymptotic,fogel1982value}, but most non-asymptotic analysis requires stochastic noises.

Besides, there are numerous variations of SME for system identification.  For example, there are computationally efficient SME algorithms by optimal bounding ellipsoid (OBE) approximation \cite{huang1987application, deller1994unifying} and  zonotopic approximation \cite{casini2017linear, bravo2006bounded}. Further, SME has been applied to  nonlinear systems \cite{musavi2024identification,tang2024uncertainty,chen2020nonlinear}, time-varying systems \cite{watkins1995ellipsoid, rao1993tracking},  unknown noise bounds \cite{li2024setmembership, lauricella2020set}, switched systems \cite{ozay2011sparsification}. SME also enjoys wide applications to adaptive control designs \cite{lu2019robust, yeh2024online, lu2023adaptive}.
In addition to system identification, SME is widely adopted in state estimation for output feedback systems \cite{bertsekas1971control,zhang2020set,tang2020set} and filtering \cite{chen2020nonlinear}.

\noindent\textbf{Notations. }
Let $(M_1, M_2)$ denote matrix/vector concatenation. Let $\vv \cdot\vv_F$ denote the Frobenius norm of a matrix. Let $\vv \cdot\vv_p$ denote the $\ell_p$ vector norm or the $L_p$ matrix norm for $1\leq p\leq \infty$. We use  $\tilde O(\cdot)$ to hide logarithmic terms. The interior of a set $E$ is denoted by $\mathring{E}$, and the boundary of $E$ is denoted by $\partial E$. $A\succ B$ means that  matrix $A-B$ is  positive definite. For any $p\in \{1,2,\infty, F\},\ r>0$, and vector/matrix $x$, let $\B_p^r(x)$  denote the closed $p$-norm ball with radius $r$ centered at $x$. Let $\B_p$ denote the unit ball centered at $0$. Let $\mathbb{I}_n$ denote the $n\times n$ identity matrix.

\noindent\textbf{Mathematical preliminaries. }
In a probability space $(\Omega, \mathcal{A}, \mathbb{P})$, we say $\{\mathcal{F}_i\}_{i\in\mathbb{N}}$ is a \textit{filtration} if $\forall\, i\in\mathbb{N}$, $\mathcal{F}_i$ is a sub-$\sigma$-algebra of $\mathcal{A}$ and $\forall\, i\leq j$ one has $\mathcal{F}_i\subseteq\mathcal{F}_j$. We denote the $\sigma$-algebra generated by a collection of random variables as $\sigma\{\cdots\}$.
A stochastic process $\{X_i\}_{i\in\mathbb{N}}$ is said to be \textit{adapted} to the filtration $\{\mathcal{F}_i\}_{i\in\mathbb{N}}$ if $\forall\, i\in\mathbb{N}$, the random variable $X_i$ is an $\mathcal{F}_i$-measurable function. Let $\tau$ be a random variable taking values from $[0,+\infty)$. We say $\tau$ is a \textit{stopping time} if $\forall t\geq 0$, we have $\{\tau\leq t\}\in\mathcal{F}_t$.

\section{Problem Formulation}
This paper considers a linear control system:
\begin{align}\label{equ: LTI}
	x_{t+1}=A^*x_t+B^*u_t+w_t, \quad t\geq 0, 
\end{align}
where $x_t\in\mathbb{R}^{n_x}$, $u_t\in\mathbb{R}^{n_u}$,  and $w_t\in\mathbb{R}^{n_x}$ respectively denote the state, the control input, and the process noise at time $t\geq 0$. The system parameters $A^*, B^*$ in \eqref{equ: LTI} are unknown and to be estimated. For ease of notation, we denote $\theta^*=(A^*,  B^*
)\in\mathbb{R}^{n_x\times n_z}$, $z_t=(
	x_t^\top,  u_t^\top)^\top \in\mathbb{R}^{n_z}$, where $n_z = n_x + n_u$. Thus, the system \eqref{equ: LTI} can be written as
\begin{equation}\label{eq2}
	x_{t+1} = \theta^*z_t + w_t.
\end{equation}

This paper focuses on a specific estimation algorithm, set membership estimation (SME), to quantify the uncertainty of $A^*, B^*$. SME is mainly applicable when  $w_t$ is bounded with a known bound $\W$ \cite{kosut1992set,bai1995membership,livstone1996asymptotic,lorenzen2019robust,fogel1982value}. This paper focuses on a known stage-wise bound $w_t\in \W$ for simplicity, and leaves the analysis for unknown bounds and/or more general bounds, e.g. energy constrained bounds \cite{fogel1979system}, as future work. 

We review the SME algorithm in detail below. Consider a sequence of single-trajectory data, $\{x_t,u_t, x_{t+1}\}_{t=0}^{T-1}$, generated from \eqref{equ: LTI}, where the horizon $T$ can be unknown beforehand. Let $\Theta_T$ denote the remaining uncertainty set of $\theta^*$ after the $T$ stages of data are revealed. $\Theta_T$ generated by SME, also called a membership set, is defined below:
\begin{align}\label{equ: SME algo}
	\Theta_T= \bigcap_{t=0}^{T-1}\left\{\hat\theta\in\mathbb{R}^{n_x\times n_z}\ :\ x_{t+1}-\hat\theta z_{t}\in\mathbb{W}\right\}.
\end{align}
Basically, SME rules out any $\hat \theta$ that is inconsistent with the linear dynamics by the noise bound $\W$. Notice that $\theta^*\in \Theta_T$ as long as $w_t \in \W$ for all $t\leq T-1$.

SME usually enjoys  better empirical performances than the confidence region formulas \cite{abbasi2011regret,simchowitz2020naive}  of LSE (see  \cite{li2024setmembership}). Our goal is to further study the convergence rates of SME by measuring the size of uncertainty sets with their diameters as defined below.

\begin{definition}[Diameter of a  set of matrices]
	For a set of matrices $\Theta\in\mathbb{R}^{n_x\times n_z}$, we define its \textit{diameter} to be $\diam(\Theta):=\sup_{\theta_1,\theta_2\in\Theta}\vv\theta_1 - \theta_2\vv_F$. 
\end{definition}

\begin{remark}[Computation of SME]
  Notice that the SME algorithm \eqref{equ: SME algo} may be computationally demanding for large $T$ and complicated $\W$. There are many approximation methods to reduce SME's computation complexity, e.g.  ellipsoidal approximations \cite{huang1987application, deller1994unifying,fogel1982value}, zonotopic approximations \cite{casini2017linear, bravo2006bounded}, fixed-complexity approximations \cite{lu2019robust}, etc. This paper only considers the vanilla version of SME and leaves  the convergence rate analysis of computationally efficient approximations  for future.
\end{remark}

In the following, we introduce the assumptions for our convergence rate analysis, including two assumptions throughout the paper and one classical assumption in the SME literature, which will be relaxed in Section \ref{sec: ts}.

\noindent\textbf{Assumptions throughout this paper.} Here, we introduce two assumptions that are considered throughout this paper. 
Firstly, we assume the stochastic properties of $w_t$ and the convexity and compactness of $\W$. 
\begin{assumption}[Compactly and convexly supported i.i.d. noise]\label{ass:iid}
	The additive noise $\{w_t\}_{t\geq 0}$ are identically and independently sampled from a compact and convex set $\mathbb{W}$ with a non-empty interior (i.e. $\mathring{\W}\neq \emptyset$) such that $\mathbb{E}(w_t) =
	0$ and $\cov\left(w_t\right)=
	\Sigma_w \succ \mathbf{0}$. The set $\W$ is known.
\end{assumption}
Though SME does not need any stochastic properties of $w_t$ to generate valid uncertainty sets that contain $\theta^*$, the i.i.d. assumption is standard in the literature when discussing SME's convergence rates for linear regression \cite{akccay2004size,lu2019robust,bai1998convergence}, as well as LSE's convergence rate analysis \cite{simchowitz2018learning,simchowitz2020naive,abbasi2011regret}. Besides, the convex and compact $\W$ is also commonly assumed in the SME literature \cite{lu2019robust,akccay2004size,bai1998convergence,bertsekas1971control}. 

Our next assumption is based on the block-martingale small-ball  (BMSB) condition introduced in \cite{simchowitz2018learning}. BMSB can be viewed as a stochastic version of the persistent excitation (PE) condition that is commonly assumed in the system identification literature \cite{lu2019robust,bai1998convergence,akccay2004size,bai1995membership}. This is because, similar to PE, BMSB requires sufficient exploration in all directions in the state space, which is crucial for the learning of the accurate system parameters.
\begin{definition}[BMSB condition \cite{simchowitz2018learning}]
    For a filtration $\{\mathcal{F}_t\}_{t\geq 0}$ and an $\{\mathcal{F}_t\}_{t\geq 0}$-adapted stochastic process $\{Z_t\}_{t\geq 0}$ such that $Z_t\in\mathbb{R}^d$, we say $\{Z_t\}_{t\geq 0}$ satisfies the $(k,\Gamma_{sb},p)$-BMSB condition for a $k\in\mathbb{Z}^+$, $\Gamma_{sb}\succ 0$, and $p\in [0,1]$ if, for any fixed unit vector $\lambda\in\mathbb{R}^d$, one has
        $\frac{1}{k}\sum_{i=1}^k\mathbb{P}\left(|\lambda^\top Z_{t+i}|\geq\sqrt{\lambda^\top\Gamma_{sb}\lambda}\middle| \mathcal{F}_t\right){\geq} p$ almost surely for all $t\geq 1$.

\end{definition}

\begin{assumption} [Bounded $z_t$ \& the BMSB condition]\label{ass:bmsb}

    {$\exists\, b_z$ such that $\forall\, t\geq 0,\ \vv z_t\vv_2\leq b_z$ almost surely.  For the filtration $\{\mathcal{F}_t\}_{t=0}^{T-1}$, where $\mathcal{F}_t:=\sigma\{w_0,\cdots, w_{t-1}, z_0,\cdots, z_t\}$, the adapted process $\{z_t\}_{t\geq 0}$\ satisfies the $(1,\sigma_z^2 \mathbb{I}_{n_z}, p_z)$-BMSB condition for some $\sigma_z>0$ and $p_z>0$.}
\end{assumption}

It has been shown in \cite{li2023non} that the BMSB condition can be achieved by any stabilizing controllers adding an i.i.d.  exploration noise $\eta_t$ with a positive definite covariance, i.e. $u_t=\pi_t(x_t)+\eta_t$.
The boundedness condition on $z_t$ can be naturally satisfied if the closed-loop system is bounded-input-bounded-output stable   since our disturbances are bounded.

\noindent\textbf{A classical assumption on the tight bound $\W$.}
Here, we review a classical assumption on the tightness of $\W$: pointwise boundary-visiting noises \cite{lu2019robust}. This assumption is important for the convergence of SME. Later, we will analyze convergence rates based on this assumption in Section \ref{sec: ptws}, and then discuss how to relax this assumption in Section \ref{sec: ts}.

\begin{assumption}[Pointwise boundary-visiting noise \cite{lu2019robust}]\label{ass: tight bound}
		$\forall\, \epsilon >0$, $\exists\, q_w(\epsilon)>0$ such that $\forall\, t\geq 0$, $\forall\, w^0\in\partial\mathbb{W}$: $\mathbb{P}\left(||w^0 - w_t||_2 < \epsilon\right)\geq q_w(\epsilon)$.
\end{assumption}
It is worth mentioning that Assumption \ref{ass: tight bound} does not require $w_t$ to visit $\W$'s boundary with a positive probability. We only assume that $w_t$ can visit any small neighborhood of the boundary with a positive probability. Three examples  satisfying Assumption \ref{ass: tight bound} are discussed below.

\begin{example}[Weighted $\ell_\infty$ ball]\label{eg1_1}
	For $w_t$ following a uniform distribution\footnote{Though only uniform distribution is considered, other distributions such as truncated Gaussian can also apply.} on $\mathbb{W} = [-a_1,a_1]\times\cdots\times[-a_{n_x}, a_{n_x}]$ with positive constants $a_1,\cdots, a_{n_x}$, we have $q_w(\epsilon)=O(\epsilon^{n_x})$.
\end{example}
\begin{example}[Weighted $\ell_1$ ball]\label{eg2_1} For $w_t$ uniformly distributed on $\mathbb{W} = \left\{w\in\mathbb{R}^{n_x}\ :\ \sum_{i=1}^{n_x} \frac{1}{a_i}|w_i| \leq 1\right\}$ with positive constants $a_1,\cdots, a_{n_x}$, we have $q_w(\epsilon)=O(\epsilon^{n_x})$.
\end{example}
\begin{example}[$\ell_2$ ball]\label{eg3_1} For $w_t$ uniformly distributed on $\mathbb{W} = \B_2^r(0)$ with fixed $r>0$ we have $q_w(\epsilon)=O(\epsilon^{n_x})$.
\end{example}

\label{sec: formulation}


\section{Convergence Rate under  Assumption \ref{ass: tight bound}}\label{sec: ptws}

In this section, we propose a non-asymptotic estimation error bound for the SME under the $\epsilon$-ball boundary-visiting noise Assumption \ref{ass: tight bound} and discuss its implications. 

\begin{theorem}\label{thm1}
Under Assumptions \ref{ass:iid}, \ref{ass:bmsb} and \ref{ass: tight bound}, for any $T, m$ such that $1\leq m <T$, for any $\delta>0$, the membership set $\Theta_T$  defined in \eqref{equ: SME algo} satisfies:
    \begin{equation}\label{eq4}
    \begin{split}
        \mathbb{P}\left(\diam(\Theta_T)>\delta\right)\leq\underbrace{\frac{T}{m}\Tilde{O}(n_z^{5/2})a_2^{n_z}\exp(-a_3m)}_{\text{Term 1}} + \\
        \underbrace{\Tilde{O}\left((n_xn_z)^{5/2}\right)a_4^{n_xn_z}[1-q_w(\frac{a_1\delta}{4})]^{\lceil\frac{T}{m}\rceil - 1} }_{\text{Term 2}},
    \end{split}
    \end{equation}
    where $a_1 = \frac{1}{4}\sigma_z p_z$, $a_2 = \max\{1, \frac{64 b_z^2}{\sigma_z^2 p_z^2}\}$, $a_3 = \frac{1}{8}p_z^2$, $a_4 = \max\{ 1, \frac{4b_z}{a_1}\}$; $b_z,\sigma_z,p_z$ are defined in Assumption \ref{ass:bmsb}.
\end{theorem}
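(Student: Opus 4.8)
The plan is to control the failure probability by pairing the BMSB excitation of $\{z_t\}$ with the boundary-visiting behaviour of $\{w_t\}$, producing the two summands of \eqref{eq4} through two nested covering arguments. First I would reduce the diameter event to a one-sided feasibility statement: since $w_t\in\W$ almost surely under Assumption \ref{ass:iid}, we have $\theta^*\in\Theta_T$ a.s., so $\diam(\Theta_T)>\delta$ forces some feasible $\theta\in\Theta_T$ with $\vv\theta-\theta^*\vv_F>\delta/2$ by the triangle inequality. Setting $E=\theta^*-\theta$ and using $x_{t+1}=\theta^*z_t+w_t$, the constraint $x_{t+1}-\theta z_t\in\W$ becomes $w_t+Ez_t\in\W$; it therefore suffices to show that, with high probability, no $E$ with $\vv E\vv_F>\delta/2$ satisfies $w_t+Ez_t\in\W$ for all $t\le T-1$.

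The geometric core is a push-out argument valid for any convex $\W$. For a fixed $E$, write $\hat v_t=Ez_t/\vv Ez_t\vv_2$ and let $w^0(\hat v_t)\in\partial\W$ be the support point maximizing $\hat v_t^\top w$ over $\W$. If $w_t$ lands within $\epsilon$ of $w^0(\hat v_t)$ while $\vv Ez_t\vv_2>\epsilon$, then $\hat v_t^\top(w_t+Ez_t)$ exceeds the support value $\max_{w\in\W}\hat v_t^\top w$, so $w_t+Ez_t\notin\W$ and this $E$ is eliminated. This is precisely where Assumption \ref{ass: tight bound} enters and where generality beyond the $\ell_\infty$ ball of \cite{li2024setmembership} is purchased: the outward boundary point $w^0(\hat v_t)$ moves with the random direction $\hat v_t$, but Assumption \ref{ass: tight bound} is uniform over $\partial\W$, so $\mathbb{P}(|w_t-w^0(\hat v_t)|<\epsilon\mid\mathcal{F}_t)\ge q_w(\epsilon)$. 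This conditional statement is legitimate because $z_t$ (hence $\hat v_t$, for fixed $E$) is $\mathcal{F}_t$-measurable while $w_t$ is independent of $\mathcal{F}_t$.

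To make the push-out bite I need $\vv Ez_t\vv_2$ to be large often, which produces Term 1. Partitioning $[0,T)$ into $\lceil T/m\rceil$ blocks of length $m$, I would combine the $(1,\sigma^2\mathbb{I}_{n_z},p_z)$-BMSB with an Azuma/Freedman bound for the $\{0,1\}$ martingale increments $\mathbbm{1}(|\lambda^\top z_t|\ge\sigma)$ to show each block contains an excited time with $|\lambda^\top z_t|\ge\sigma$, except with probability $\exp(-a_3m)$, $a_3=p_z^2/8$. Covering the sphere $\mathbb{S}^{n_z-1}$ (net size $\sim a_2^{\,n_z}$ with $a_2$ governed by $b_z^2/(\sigma^2p_z^2)$), unioning over the $T/m$ blocks, and absorbing the polynomial cost into $\tilde{O}(n_z^{5/2})$ yields Term 1. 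On this excitation event, choosing $\lambda$ as the top right singular vector of $E$ gives $\vv Ez_t\vv_2\ge s_{\max}(E)\,|\lambda^\top z_t|\ge(\vv E\vv_F/\sqrt{n_z})\,\sigma$, a quantity proportional to $\delta$, which fixes the admissible margin recorded as $\epsilon=a_1\delta/4$.

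Conditioning on the excitation event, I would then net the error matrices $E\in\R^{n_x\times n_z}$ over their bounded feasible range (net size $\sim a_4^{\,n_xn_z}$, $a_4=\max\{1,4b_z/a_1\}$, the radius chosen so that $\vv(E-E')z_t\vv_2\le b_z\vv E-E'\vv_F$ stays below half the margin). For a fixed net point, selecting in each block the first excited time $\tau_j$ makes $\tau_j$ a stopping time with $w_{\tau_j}$ independent of $\mathcal{F}_{\tau_j}$, so the push-out succeeds independently across blocks, each with probability at least $q_w(a_1\delta/4)$; hence $E$ survives all blocks with probability at most $[1-q_w(a_1\delta/4)]^{\lceil T/m\rceil-1}$. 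A union bound over the net, with prefactor $\tilde{O}((n_xn_z)^{5/2})$, gives Term 2, and summing the two failure probabilities proves \eqref{eq4}. The hard part will be orchestrating the two nets and the stopping-time construction while preserving the conditional independence that Assumption \ref{ass: tight bound} needs: unlike the axis-aligned $\ell_\infty$ case, the outward direction $\hat v_t$ and its support point vary with both $t$ and $E$, so I must ensure the excitation net controls $\vv Ez_t\vv_2$ uniformly in $E$ and that the per-block boundary-visiting trials stay conditionally independent after conditioning on the BMSB event; the careful bookkeeping of the constants $a_1,\dots,a_4$ entering $\epsilon=a_1\delta/4$ is the most delicate point.
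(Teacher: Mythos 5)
Your proposal is correct and follows essentially the same route as the paper's proof: reduce to the existence of a large error matrix, bound the non-excitation probability via BMSB and a sphere covering (Term 1), net the error matrices, and at a per-block stopping time use the support-function push-out together with Assumption \ref{ass: tight bound} and a Bayes/total-probability chaining to obtain the geometric factor $[1-q_w(a_1\delta/4)]^{\lceil T/m\rceil-1}$ (Term 2). Your push-out via the maximizing support point of $\hat v_t=Ez_t/\vv Ez_t\vv_2$ is the paper's Claim \ref{claim5} with the opposite sign convention, and the "delicate bookkeeping" you flag is exactly what the paper resolves by conditioning on $\{L_{i,k}=l\}$ and the past blocks rather than asserting outright independence.
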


We refer the reader to Appendix \ref{app:thm1} for the proof of Theorem \ref{thm1}.

\noindent\textbf{On Inequality \eqref{eq4}.} Inequality \eqref{eq4} provides an upper bound on the probability of the ``large diameter event" i.e. $\diam(\Theta_T)>\delta$. Therefore, the smaller the upper bound is, the better the SME performs in terms of estimation accuracy. Notice that Inequality \eqref{eq4} involves two terms: $Term \ 1$ bounds the probability that PE does not hold, and $Term \ 2$ bounds the probability that the uncertainty set is still large even though PE holds (see proof details in Appendix \ref{app:thm1}).

\noindent\textbf{On  the choices of $m$.} 
Notice that $Term \ 1$ in \eqref{eq4} decays exponentially in $m$ yet increases with $T$, while $Term \ 2$ decays exponentially in $\frac{T}{m}$. To ensure a small upper bound in \eqref{eq4}, one can choose $m$  at a scale of $m\geq O(\log T)$ (hiding other constant factors for intuitions here). The optimal choice of $m$ minimizes the upper bound \eqref{eq4}.

Though choosing $m$ seems complicated and   requires the knowledge of $T$, it is worth emphasizing that the choice of $m$ only affects our theoretical bound but does not affect the empirical performance of SME. Therefore, a suboptimal choice of $m$ will only increase the gap between our theory and the actual empirical performance, but will not degrade the empirical performance of SME.

\noindent\textbf{On convergence rates.} Theorem \ref{thm1} can be converted to convergence rates of $\diam(\Theta_T)$. Since \eqref{eq4} depends on $q_w(\cdot)$, to provide explicit bounds as illustrating examples, we consider $q_w(\epsilon)=O(\epsilon^p)$ for  $p> 0$, which includes numerous common distributions, e.g. Examples \ref{eg1_1}-\ref{eg3_1}.

\begin{corollary}\label{cor: qw=O(epsilon)1}
    If $q_w(\epsilon) = O(\epsilon^{p})$ for $p > 0$, given $m\geq O(n_z + \log T - \log\epsilon)$, then with probability no less than $1-2\epsilon$, one has
    \begin{equation*}
        \diam(\Theta_T) \leq \tilde O \left(\left(\frac{n_x n_z}{T}\right)^{1/p}\right).
    \end{equation*}
\end{corollary}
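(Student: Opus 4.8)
The plan is to make each of the two terms in \eqref{eq4} at most $\epsilon$ and then apply a union bound, so that the complementary event $\{\diam(\Theta_T)\le\delta\}$ holds with probability at least $1-2\epsilon$. The key observation is that the two terms impose competing demands on the block length $m$: Term~1 decays like $\exp(-a_3 m)$ and so forces $m$ to be large, whereas Term~2 decays like $[1-q_w(a_1\delta/4)]^{\lceil T/m\rceil-1}$ and so favors $m$ small (to make $T/m$ large). The hypothesis $m\ge O(n_z+\log T-\log\epsilon)$ is exactly the threshold that tames Term~1, and I would take $m$ at this threshold so as to maximize $\lceil T/m\rceil-1$ and thereby extract the sharpest bound on $\delta$ from Term~2.

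First I would control Term~1. Requiring $\frac{T}{m}\tilde{O}(n_z^{5/2})a_2^{n_z}\exp(-a_3 m)\le\epsilon$ and taking logarithms gives the sufficient condition $a_3 m\ge \log(T/m)+\tfrac52\log n_z+n_z\log a_2+\log(1/\epsilon)+O(1)$. Absorbing the constants and the slowly varying $\log(T/m)$ factor, this is implied by $m\ge O(n_z+\log T-\log\epsilon)$, which matches the stated assumption; I then fix $m$ at a constant multiple of this value, giving $m=\tilde{O}(n_z)$.

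Next I would control Term~2 and invert it for $\delta$. Substituting $q_w(\epsilon')=O((\epsilon')^{p})$ gives $q_w(a_1\delta/4)=\Omega(\delta^{p})$ (the constants $a_1$ and $4$ being absorbed into the implicit constant), and the elementary inequality $1-x\le e^{-x}$ yields
\[
\text{Term }2\ \le\ \tilde{O}\big((n_xn_z)^{5/2}\big)\,a_4^{\,n_xn_z}\exp\!\big(-c\,\delta^{p}(\lceil T/m\rceil-1)\big)
\]
for some constant $c>0$. Setting this at most $\epsilon$, taking logarithms, and using $\lceil T/m\rceil-1=\Theta(T/m)$ gives $\delta^{p}\ \ge\ \tfrac{c'm}{T}\big(n_xn_z\log a_4+\tfrac52\log(n_xn_z)+\log(1/\epsilon)\big)$. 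Here the dominant factor inside the parentheses is $n_xn_z\log a_4=\tilde{O}(n_xn_z)$, so that $\delta^{p}=\tilde{O}(n_xn_z)\cdot(m/T)$; collecting $m$ and the remaining logarithmic and lower-order dimensional factors into $\tilde{O}(\cdot)$ produces $\delta=\tilde{O}\big((n_xn_z/T)^{1/p}\big)$, the claimed rate. The union bound over the two $\epsilon$-events then completes the argument.

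The main obstacle is the bookkeeping in this last step. One must simultaneously invert the power law $q_w$, linearize $[1-q_w(\cdot)]^{\lceil T/m\rceil-1}$ through $1-x\le e^{-x}$, and respect the opposing constraints on $m$, all while honestly tracking how the dimension factors $n_x,n_z$ (coming from both the $a_4^{\,n_xn_z}$ prefactor and the chosen $m=\tilde{O}(n_z)$) and the logarithmic terms aggregate. Verifying that these combine so that the advertised $n_xn_z$ dependence dominates, with the residual contributions genuinely hidden inside $\tilde{O}(\cdot)$, is the delicate point, and it is precisely why choosing $m$ at the lower threshold is essential.
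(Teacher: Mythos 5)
Your proposal is correct and follows essentially the same route as the paper's own proof: make Term 1 at most $\epsilon$ via the stated lower bound on $m$, make Term 2 at most $\epsilon$ and invert for $\delta$ using the power-law form of $q_w$, and conclude by a union bound. The only cosmetic difference is that you linearize $[1-q_w(\cdot)]^{\lceil T/m\rceil-1}$ via $1-x\le e^{-x}$ before solving for $\delta$, whereas the paper solves first and then applies the equivalent inequality $x-1\ge\log x$; both yield the same $\tilde O\left(\left(n_xn_z/T\right)^{1/p}\right)$ rate, and your explicit remark that $m$ must be taken at the lower threshold is a point the paper leaves implicit.
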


\noindent
\textbf{Convergence rates for Examples 1-3.} Recall that we have shown $q_w(\epsilon)=O\left(\epsilon^{n_x}\right)$ in Examples \ref{eg1_1}-\ref{eg3_1}. By Corollary \ref{cor: qw=O(epsilon)1}, we have the convergence rate: $\diam(\Theta_T)\leq \tilde{O}\left(\left(\frac{n_xn_z}{T}\right)^{1/n_x}\right)$. Though this provides valid convergence rate bounds for general convex $\W$, these bounds are much worse than LSE's convergence rate $\frac{1}{\sqrt T}$ and do not explain the promising empirical performance of SME (see Section \ref{sec: numerical} and \cite{li2024setmembership}). 

Therefore, in the next section, we  seek to improve the theoretical convergence rate bounds of SME.

\label{sec: thm1}


\section{Convergence Rate with Relaxed Assumption} \label{sec: ts}
This section provides a relaxed version of Assumption \ref{ass: tight bound}, which enables tighter convergence rates in some cases. In the following, Section \ref{subsec: relax ass}   introduces this relaxed version as Assumption \ref{ass: tbtl}. Then, Section \ref{subsec: conv rate}  discusses the corresponding estimation error bound in Theorem \ref{thm2}, followed by discussions on its differences from Theorem \ref{thm1}.

\subsection{A Relaxed Assumption on the Tightness of $\W$}\label{subsec: relax ass}
This subsection will introduce a relaxed version of Assumption \ref{ass: tight bound} in Section \ref{sec: formulation}. Our assumption relies on supporting half-spaces (SHS) and $\epsilon$-slices induced by SHS for convex sets. We first review these concepts below.
\begin{definition}[\textbf{Supporting half-spaces \& $\mathbf{\epsilon}$-slices}]\label{def: shs} Consider a convex and compact set $\mathbb{W}\subseteq\mathbb{R}^n$ with a non-empty interior (i.e. $\mathring{\mathbb{W}}\neq\emptyset$). For a boundary point $c\in\partial \mathbb{W}$ and a unit vector $h$ (i.e. $\vv h\vv_2 = 1$), we say the half-space
\begin{equation*}
    H(c,h) := \left\{x\in\mathbb{R}^n\ :\ h^\top x\geq h^\top c\right\}
\end{equation*}
is a \textbf{supporting half-space (SHS)} of $\mathbb{W}$ at point $c$ with normal vector $h$, if $\mathbb{W}\subseteq H(c,h)$. 

\begin{figure}[h]
     \centering
     \begin{subfigure}[B]{0.265 \textwidth}
         \centering
         \includegraphics[width=\textwidth]{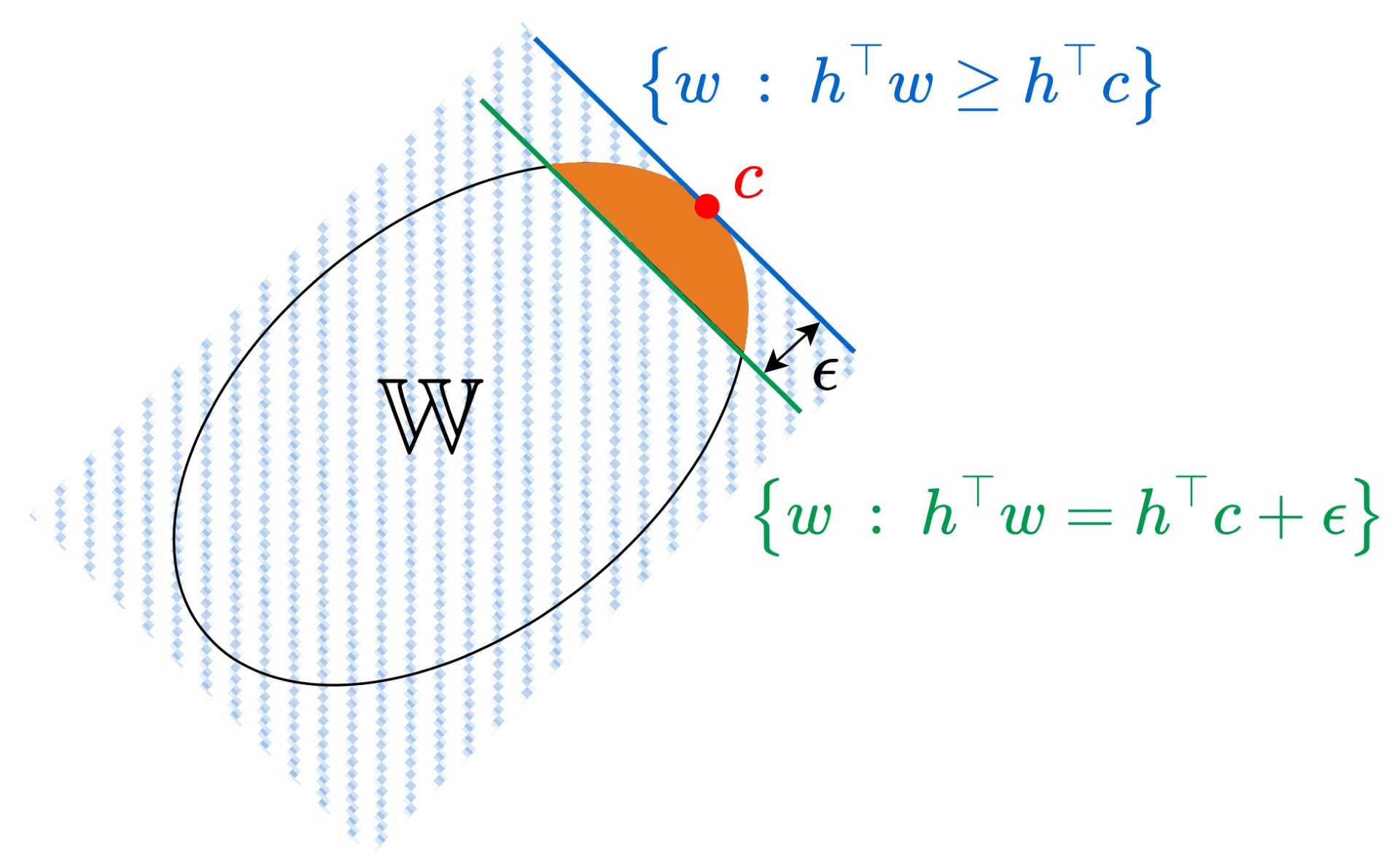}
         \caption{The $\epsilon$-slice induced by $H(c,h)$.}
         \label{fig: ts}
     \end{subfigure}
     \hfill
     \begin{subfigure}[B]{0.21 \textwidth}
         \centering
         \includegraphics[width=\textwidth]{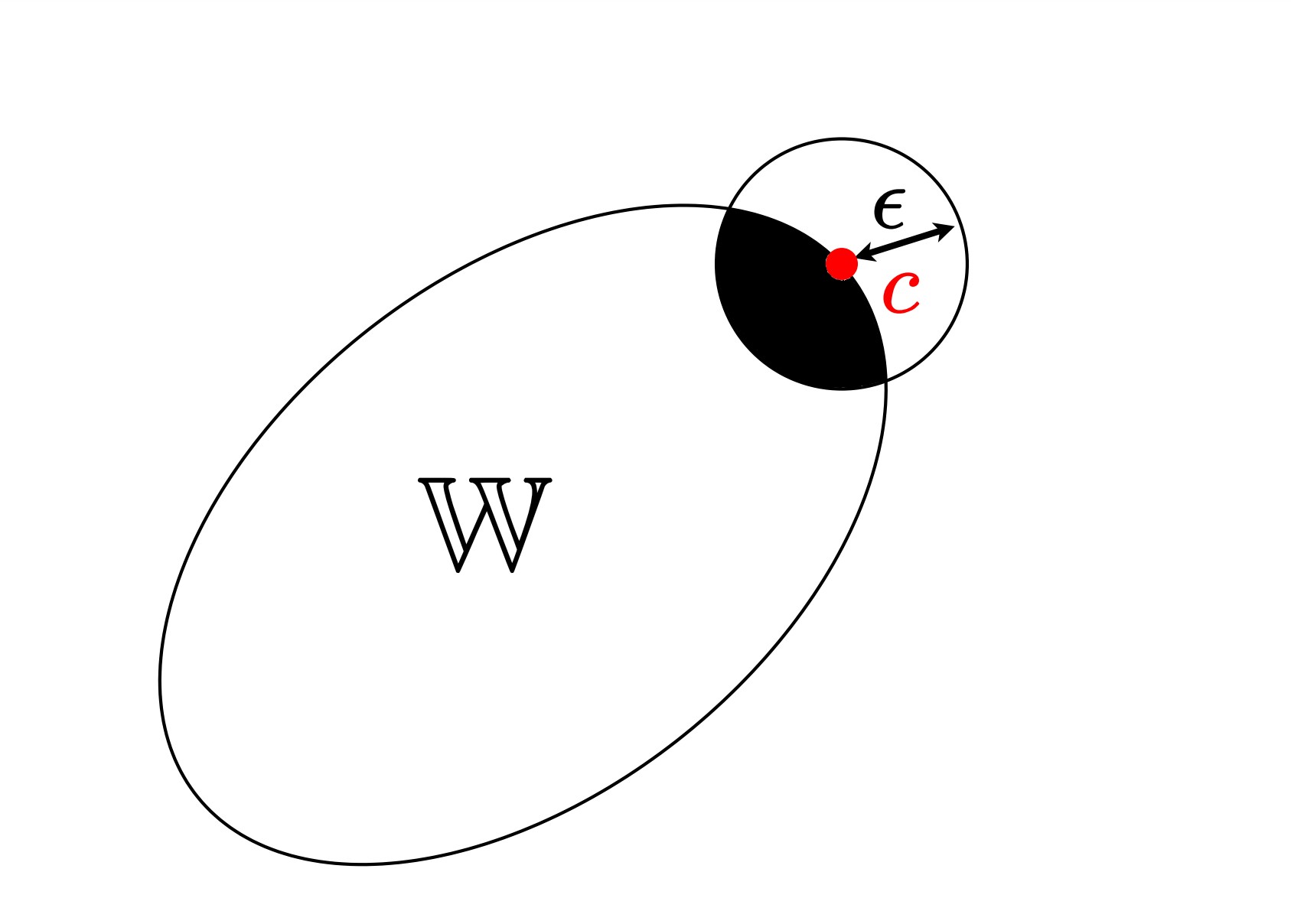}
         \caption{The $\epsilon$-ball at $c$.}
         \label{fig: ptws}
     \end{subfigure}
     \caption{(a) and (b) respectively demonstrate the $\epsilon$-slice and the $\epsilon$-neighborhood of a boundary point $c$. In Figure \ref{fig: compare assumptions}(a), the blue shaded half-space represents the SHS $H(c,h)$, and the orange-filled area is the $\epsilon$-slice induced by $H(c,h)$. In Figure \ref{fig: compare assumptions}(b), the black-filled area is the $\epsilon$-ball in Assumption \ref{ass: tight bound} at $c$. It can be shown that with the same $\epsilon$, the $\epsilon$-slice on the left is larger than the $\epsilon$-ball on the right (see Appendix \ref{app: epsilon}).
     }
     \label{fig: compare assumptions}
\end{figure}

Furthermore, $\forall\, \epsilon>0$, we define the $\mathbf{\epsilon}$\textbf{-slice} of $\mathbb{W}$ induced by $H(c,h)$ below:
\begin{equation*}
    S^\epsilon_{\mathbb{W}}(c,h) := \left\{x\in\mathbb{R}^n\ :\ h^\top c + \epsilon \geq h^\top x \geq h^\top c\right\}\cap\mathbb{W}.
\end{equation*}
\end{definition}

With Definition \ref{def: shs}, we can present a relaxed version of Assumption \ref{ass: tight bound} as in Assumption \ref{ass: tbtl} below.
\begin{assumption}[$\epsilon$-slice partial-boundary-visiting noise]\label{ass: tbtl}
  There exists a subset of boundary points $\mathcal{C}=\left\{c_1,\cdots,c_{\chi}\right\}\subseteq\partial\mathbb{W}$\footnote{Though we consider a finite number of boundary points in Assumption \ref{ass: tbtl}, the following Theorem \ref{thm2} remains true when $\mathcal{C}$ is infinite. Besides, Theorem \ref{thm1} may serve as a special case of when $\mathcal{C}$ is infinite.}, and a set of unit vectors $\mathcal{H} = \left\{h_1,\cdots,h_{\chi}\right\}$ satisfying the following properties. 
    \begin{itemize}
        \item [(i)]$\forall\, i\in\{1,\cdots,\chi\}$, $H(c_i, h_i)$ is a SHS of $\mathbb{W}$.
        \item [(ii)] $\displaystyle\bigcap_{i=1}^{\chi} H(c_i,h_i)$ is a compact set.
        \item [(iii)] $\forall\,\epsilon >0$, $\exists\, p_w(\epsilon) > 0$ such that $\forall\, t\geq 0$, $\forall\, i\in\{1,\cdots, \chi\}$,
    \begin{equation*}
        \mathbb{P}\left(w_t\in S^\epsilon_{\mathbb{W}}(c_i,h_i)\right)\geq p_w(\epsilon) > 0.
    \end{equation*}
    \end{itemize} 
\end{assumption}
Assumption \ref{ass: tbtl} is a relaxation of Assumption \ref{ass: tight bound} in two perspectives:
\begin{itemize}
    \item[(i)] While Assumption \ref{ass: tight bound} requires a non-vanishing probability in the neighborhood of every  point on $\partial \W$,  Assumption \ref{ass: tbtl} only requires it for a subset of $\partial \W$.
    \item[(ii)] The $\epsilon$-ball considered in Assumption \ref{ass: tight bound} is a subset of the $\epsilon$-slice considered in Assumption \ref{ass: tbtl} for the same $\epsilon$ (see Figure \ref{fig: compare assumptions} as an example; see Appendix \ref{app: epsilon} for detailed proof.). Therefore, $p_w(\epsilon)\geq q_w(\epsilon)$, which usually enables tighter convergence rates as shown later.
\end{itemize} 
To provide more intuitions for Assumption \ref{ass: tbtl}, we discuss the three examples in Section \ref{sec: formulation} below. 

\begin{example}[Weighted $\ell_\infty$ ball and weighted $\ell_1$ ball]\label{eg1}
In both Examples \ref{eg1_1} and  \ref{eg2_1}, we can choose $\mathcal C$ and $\mathcal H$ as follows:  let $\mathcal{C}$ consist of one non-extreme point per  facet of $\W$, and let $\mathcal{H}$ consist of the unit normal vectors of each facet of $\W$. It is shown in Appendix \ref{box} that  $p_w(\epsilon)=O(\epsilon)$ for both Examples \ref{eg1_1} and \ref{eg2_1}, which is much larger than $q_w(\epsilon)=O(\epsilon^{n_x})$ derived in  these examples. This enables better and tighter convergence rate bounds as shown later in this section.
\end{example}


\begin{example} [$\ell_2$ ball]\label{eg3}
Consider $w_t$  in Example \ref{eg3_1}. Let $\mathcal C=\partial \W$ and $\mathcal H$ be the corresponding normal vectors of the supporting half-spaces for the boundary points. It is shown in Appendix \ref{2norm} that $p_w(\epsilon)=O\left(\epsilon^{\frac{n_x+1}{2}}\right)$, which is much smaller than $q_w(\epsilon)=O\left(\epsilon^{n_x}\right)$ in Example \ref{eg3_1}, enabling a better theoretical convergence rate bound as shown later.
\end{example}

\subsection{Convergence Rate Analysis under Assumption \ref{ass: tbtl}}\label{subsec: conv rate}
From now on, we will assume that Assumptions \ref{ass:iid}, \ref{ass:bmsb}, and \ref{ass: tbtl} hold. We provide a new version of the estimation error bound in Theorem \ref{thm2} based on our relaxed Assumption \ref{ass: tbtl}. 

\begin{theorem}\label{thm2}Under Assumptions \ref{ass:iid}, \ref{ass:bmsb} and \ref{ass: tbtl}, $\forall\, T>m>0, \delta>0$, the membership set $\Theta_T$  defined in \eqref{equ: SME algo} satisfies:
    \begin{equation}\label{eqtl}
    \begin{split}
        \mathbb{P}\left(\diam(\Theta_T)>\delta\right)\leq\underbrace{\frac{T}{m}\Tilde{O}(n_z^{5/2})a_2^{n_z}\exp(-a_3m)}_{\text{Term 1}} + \\
        \underbrace{\Tilde{O}\left((n_xn_z)^{5/2}\right)a_5^{n_xn_z}[1-p_w(\frac{a_1\delta\xi}{4})]^{\left\lceil \frac{T}{m}\right\rceil - 1}}_{\text{Term 3}},
    \end{split}
    \end{equation}
    where $a_5 = \max\{ 1, \frac{4b_z}{a_1\xi}\}$, the projection constant $\xi = \min_{\vv x\vv_2 = 1}\max_{h\in\mathcal{H}} h^\top x$\footnote{In the case where $|\mathcal{H}| = +\infty$, the projection constant is similarly defined by $\xi = \min_{\vv x\vv_2 = 1}\sup_{h\in\mathcal{H}} h^\top x$.}, and $a_1,a_2,a_3$ are defined in Theorem \ref{thm1}.
\end{theorem}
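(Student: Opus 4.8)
The plan is to recenter the membership set at the truth and convert the diameter bound into a statement about which parameter deviations survive. Setting $\Delta = \hat\theta - \theta^*$, the membership condition $\hat\theta \in \Theta_T$ from \eqref{equ: SME algo} is equivalent to $w_t - \Delta z_t \in \W$ for every $t \le T-1$, so $\Theta_T - \theta^*$ is exactly the convex set of feasible deviations and contains the origin (since $\theta^* \in \Theta_T$ whenever $w_t \in \W$). Because $\vv\Delta_1\vv_F + \vv\Delta_2\vv_F \ge \vv\Delta_1 - \Delta_2\vv_F$, the event $\diam(\Theta_T) > \delta$ forces the existence of a feasible $\Delta$ with $\vv\Delta\vv_F > \delta/2$; it therefore suffices to bound the probability that a feasible deviation of size $\delta/2$ exists.

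The core geometric step ties feasibility to the $\epsilon$-slices of Definition \ref{def: shs}. If $w_t \in S^\epsilon_{\W}(c_i, h_i)$ then $0 \le h_i^\top w_t - h_i^\top c_i \le \epsilon$, and combining this with $w_t - \Delta z_t \in \W \subseteq H(c_i, h_i)$ gives $h_i^\top \Delta z_t \le \epsilon$; a single slice visit thus eliminates every $\Delta$ with $h_i^\top \Delta z_t > \epsilon$. To make this uniform over all possible directions of $\Delta z_t$, I would invoke the projection constant $\xi = \min_{\vv x\vv_2 = 1}\max_{h\in\mathcal{H}} h^\top x$: for the direction $g_t = \Delta z_t / \vv\Delta z_t\vv_2$ there is some $h \in \mathcal{H}$ with $h^\top g_t \ge \xi$, hence $h^\top \Delta z_t \ge \xi\vv\Delta z_t\vv_2$. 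Consequently, whenever $\vv\Delta z_t\vv_2$ is bounded below at the persistent-excitation scale and $w_t$ falls into the matching slice, any deviation of size $\gtrsim \delta$ is ruled out; this is precisely how the slice width $\tfrac{a_1\delta\xi}{4}$ enters Term 3, and why $\xi$ is absent in the $\ell_\infty$-ball Theorem \ref{thm1}, where there is no projection loss.

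With the geometry isolated, I would bound $\mathbb{P}(\diam(\Theta_T) > \delta)$ by $\mathbb{P}(\mathcal{E}^\complement) + \mathbb{P}(\{\diam(\Theta_T) > \delta\}\cap\mathcal{E})$, where $\mathcal{E}$ is a persistent-excitation event. For the first piece I would partition $\{0,\dots,T-1\}$ into $\lceil T/m\rceil$ blocks of length $m$ and apply the $(1,\sigma^2\mathbb{I}_{n_z},p_z)$-BMSB condition of Assumption \ref{ass:bmsb} with the matrix small-ball machinery of \cite{simchowitz2018learning}, so that on $\mathcal{E}$ each block contains a time at which $\vv\Delta z_t\vv_2$ is large relative to $a_1\vv\Delta\vv_F$; a net over the Frobenius sphere in $\R^{n_x\times n_z}$ makes this uniform in $\Delta$ and yields Term 1, $\tfrac{T}{m}\Tilde O(n_z^{5/2})a_2^{n_z}\exp(-a_3 m)$. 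For the second piece I would fix a direction $V$ in a $\gamma$-net of the unit sphere and use that $w_t$ is independent of $\mathcal{F}_t$, so the selected half-space $h_{i(t)}$ is $\mathcal{F}_t$-measurable and $\mathbb{P}(w_t \in S^\epsilon_{\W}(c_{i(t)},h_{i(t)}) \mid \mathcal{F}_t) \ge p_w(\epsilon)$ by Assumption \ref{ass: tbtl}(iii). Selecting one qualifying time per block and exploiting the i.i.d. noise, the chance that no block produces a binding slice constraint decays like $[1 - p_w(\tfrac{a_1\delta\xi}{4})]^{\lceil T/m\rceil - 1}$, and a union bound over the net of size $\le a_5^{n_x n_z}$ (up to $\Tilde O((n_x n_z)^{5/2})$ polynomial factors) produces Term 3.

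The main obstacle, and the point of departure from the classical i.i.d.-regression analysis, is the interplay between the adapted, temporally correlated regressors $z_t$ and the conditioning needed for the slice-visiting probability. The relevant half-space $h_{i(t)}$ cannot be fixed ahead of time because it depends on the random vector $\Delta z_t$; the remedy is to freeze $\Delta$ on a deterministic net first, after which $h_{i(t)}$ becomes $\mathcal{F}_t$-measurable while $w_t \perp \mathcal{F}_t$, and only then to union-bound. Arranging the block and stopping-time bookkeeping so that the excitation event and the per-block slice visits live on disjoint, correctly conditioned portions of the filtration --- which is what turns $\lceil T/m\rceil$ into $\lceil T/m\rceil - 1$ --- is the delicate part, and it is exactly this that the stopping-time argument encapsulates, with the two halves isolated into the lemmas bounding Term 1 and Term 3.
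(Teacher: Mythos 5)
Your proposal is correct and follows essentially the same route as the paper: recentering at $\theta^*$, reducing $\diam(\Theta_T)>\delta$ to the existence of a surviving deviation of Frobenius norm $\delta/2$, splitting on a blockwise persistent-excitation event bounded via BMSB, covering the Frobenius unit sphere with a $1/a_5$-net, selecting one excited time per block by a stopping time, and observing that survival forces $w_t$ to miss the adaptively chosen $\epsilon$-slice (with $\epsilon=\tfrac{a_1\delta\xi}{4}$ via the projection constant), which the conditional independence argument turns into the $[1-p_w(\cdot)]^{\lceil T/m\rceil-1}$ factor. The only slight inaccuracy is attributing the absence of $\xi$ in Theorem \ref{thm1} to an $\ell_\infty$-ball setting; it is rather that Assumption \ref{ass: tight bound} effectively uses all unit normals, so the projection constant degenerates to $1$.
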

We refer the reader to Appendix \ref{app: thm2} for the proof of Theorem \ref{thm2}.

\noindent\textbf{Differences between Theorems \ref{thm1} and  \ref{thm2}.} Both \eqref{eq4} and \eqref{eqtl} consist of two terms, where $Term \ 1$ is the same, but the second term is different. $Term\ 3$ differs from $Term\ 2$ in two aspects: (i) $Term \ 3$ depends on $p_w(\cdot)$ while $Term\ 2$ depends on $q_w(\cdot)$, and (ii) there is an additional projection constant $\xi$ in $p_w(\cdot)$ and $a_5$ of $Term \ 3$. Notice that both $p_w$ and $\xi$ depend on our choices of $\mathcal C$ and $\mathcal H$, which will be discussed in more details below.

\noindent\textbf{On the projection constant.} $\xi$ depends on our choices of $\mathcal C$ and $\mathcal H$. By definition, we have  $\xi := \displaystyle\min_{\vv x\vv_2 = 1}\max_{h\in\mathcal{H}}h^\top x$. It can be shown that $0<\xi \leq 1$ (See Appendix \ref{app: xi}). Notice that, by adding more boundary points to $\mathcal C$ and more directions to $\mathcal H$, we can increase $\xi$. As an extreme case, if we choose all the points on the boundary and choose all the unit vectors as $\mathcal H$, then $\xi=1$. However, if we choose too many points in $\mathcal C$,  $p_w(\cdot)$ may decrease since it is a uniform lower bound for all points in $\mathcal C$. This suggests   a tradeoff on the choices of $\mathcal C$ and $\mathcal H$.

\noindent\textbf{On the choices of $\mathcal C$ and $\mathcal{H}$.} There is a trade-off on choosing $\mathcal C$ and $\mathcal{H}$ to minimize the upper bound in \eqref{eqtl}. On the one hand, as discussed earlier, since \eqref{eqtl} decreases with $\xi$, it is tempting to add more boundary points to $\mathcal{C}$ to increase $\xi $ to reduce the upper bound in \eqref{eqtl}. On the other hand, notice that \eqref{eqtl} decreases with $p_w(\cdot)$, but having  more points in $\mathcal C$ might decrease the boundary-visiting probability density $p_w(\cdot)$, which induces a larger upper bound \eqref{eqtl}. For instance, consider Example \ref{eg1_1}, if we add a SHS at a degenerate point that is not parallel to any facets of the weighted $\ell_\infty$ ball, $p_w(\epsilon)$ will decrease to $O(\epsilon^{n_x})$. Therefore, the best estimation error bound induced by Theorem \ref{thm2} is by optimizing over the choices of $\mathcal C$ and $\mathcal H$.

Though it is complicated to optimize the choices of $\mathcal C$ and $\mathcal H$ in general, it is worth emphasizing that these choices only affect our theoretical bounds and do not affect the empirical performance of SME. 

Besides, for some special cases, we have some intuitions on optimizing $\mathcal C$ and $\mathcal H$. For example, for any polytopic $\W$,  we choose exactly one non-extreme point as $c$ for every facet of $\mathbb{W}$, then the unit normal vector $h$ at $c$ is unique, and $p_w(\epsilon)=O(\epsilon)$ is the largest possible value.

\noindent\textbf{On the convergence rates.} Similar to Corollary \ref{cor: qw=O(epsilon)1}, we discuss the explicit convergence rates of SME induced by Theorem \ref{thm2} in the following corollary.

\begin{corollary}\label{cor: qw=O(epsilon)}
    If $p_w(\epsilon) = O(\epsilon^p)$ for  $p> 0$, given $m\geq O(n_z + \log T - \log\epsilon)$,  with probability at least $1-2\epsilon$, one has
    \begin{equation*}
        \diam(\Theta_T) \leq \tilde O \left(\frac{1}{\xi}\left(\frac{n_x n_z}{T}\right)^{\frac{1}{p}}\right).
    \end{equation*}
\end{corollary}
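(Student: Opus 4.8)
The plan is to convert the two–term probability bound of Theorem~\ref{thm2} into a high–probability diameter bound by forcing each of Term~1 and Term~3 to be at most $\epsilon$, so that a union bound yields total failure probability at most $2\epsilon$. Since the diameter threshold $\delta$ enters only through Term~3, the natural order is: first choose the block length $m$ large enough to make Term~1 small regardless of $\delta$, and then, with $m$ fixed, solve for the smallest $\delta$ that drives Term~3 below $\epsilon$. The hypothesis $m\ge O(n_z+\log T-\log\epsilon)$ is precisely what the Term~1 analysis will produce, so I would present it as the quantity that makes the first step go through.

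First I would control $\text{Term~1}=\frac{T}{m}\tilde O(n_z^{5/2})a_2^{\,n_z}\exp(-a_3 m)$. Taking logarithms, $\text{Term~1}\le\epsilon$ is equivalent to $a_3 m \ge \log(T/m)+\tfrac52\log n_z+n_z\log a_2-\log\epsilon+O(1)$. Because $a_3=\tfrac18 p_z^2$ is a constant and $a_2=\max\{1,64b_z^2/(\sigma^2 p_z^2)\}=O(1)$ when $b_z,\sigma,p_z$ are treated as constants, the dominant requirement is $m\gtrsim n_z+\log T-\log\epsilon$, matching the stated hypothesis; here I would note that the $-\log m$ term only helps and that $\log(T/m)\le\log T$ is absorbed.

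Next, with such an $m$ fixed, I would bound Term~3. Applying $1-x\le e^{-x}$ gives
\[
\text{Term~3}\le \tilde O\!\big((n_x n_z)^{5/2}\big)\,a_5^{\,n_x n_z}\,\exp\!\Big(-\,p_w\!\big(\tfrac{a_1\delta\xi}{4}\big)\big(\lceil T/m\rceil-1\big)\Big).
\]
Substituting the assumed $p_w(\epsilon')=O(\epsilon'^{\,p})$ yields $p_w(\tfrac{a_1\delta\xi}{4})=\Omega\big((\delta\xi)^p\big)$, with the factor $(a_1/4)^p$ absorbed. Requiring the right-hand side to be at most $\epsilon$ and taking logarithms gives a constraint of the form $(\delta\xi)^p\gtrsim \frac{1}{\lceil T/m\rceil-1}\big(n_x n_z\log a_5+\tfrac52\log(n_x n_z)+\log\tfrac1\epsilon\big)$. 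Since $\log a_5=O(\log(1/\xi))$ and $m$ is logarithmic in $T$, solving for $\delta$ and taking the $p$-th root produces $\delta=\tilde O\big(\tfrac1\xi(\tfrac{n_x n_z}{T})^{1/p}\big)$, and a union bound over the two failure events finishes the proof.

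The main obstacle I anticipate is the bookkeeping rather than any conceptual difficulty: one must simultaneously satisfy the Term~1 constraint (which pushes $m$ up) and keep $\lceil T/m\rceil-1$ large enough for Term~3 (which pushes $m$ down), and then verify that every factor not written explicitly in the statement—namely $m\sim\log T$, $\log a_5\sim\log(1/\xi)$, the $(n_xn_z)^{5/2}$ prefactor, and the $\log(1/\epsilon)$ from the confidence level—is genuinely absorbed into $\tilde O(\cdot)$ so that only the leading term $\tfrac1\xi(\tfrac{n_xn_z}{T})^{1/p}$ survives. In particular I would check carefully how the additive $n_z$ hidden inside the choice of $m$ multiplies against the $n_xn_z$ numerator, confirming that it contributes only lower-order, logarithmic-in-$T$ corrections and does not alter the advertised rate.
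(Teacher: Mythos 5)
Your proposal is correct and follows essentially the same route as the paper: force Term~1 below $\epsilon$ via the stated choice of $m$, force Term~3 below $\epsilon$ by solving for $\delta$ (the paper's Appendix C does this for Corollary~\ref{cor: qw=O(epsilon)1} using $x-1\ge\log x$, which is equivalent to your $1-x\le e^{-x}$, and then obtains Corollary~\ref{cor: qw=O(epsilon)} by substituting $a_5$ for $a_4$ and $\xi\delta$ for $\delta$), and conclude with a union bound. The bookkeeping you flag, including $\log a_5=O(\log(1/\xi))$ and $m=\tilde O(\log T)$ being absorbed into $\tilde O(\cdot)$, matches how the paper handles it.
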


\noindent\textbf{Convergence rates for Examples \ref{eg1}-\ref{eg3}.}
For Example \ref{eg1}, we have $p_w(\epsilon) = O(\epsilon)$. By Corollary \ref{cor: qw=O(epsilon)}, we have the convergence rates of both the $\ell_1$ and the $\ell_\infty$  ball supports are $\tilde{O}\left(\frac{n_xn_z}{\xi T}\right)$. Specially, for the $\ell_\infty$ ball, $\xi = \frac{1}{\sqrt{n_x}}$, which is consistent with the result in \cite{li2024setmembership}. As for Example \ref{eg3}, we have $p_w(\epsilon) = O\left(\epsilon^{\frac{n_x+1}{2}}\right)$. Hence, for the $\ell_2$ ball, we have $\xi = 1$, which leads to a convergence rate of $\tilde{O}\left(\left(\frac{n_xn_z}{T}\right)^{\frac{2}{n_x+1}}\right)$.

Though only uniform distribution is considered in the above two examples, some other noise types also satisfy the $O(\epsilon)$ boundary assumption (e.g. truncated Gaussian). By Corollary \ref{cor: qw=O(epsilon)}, the convergence rates of both the weighted $\ell_\infty$-ball and the weighted $\ell_1$ ball are of $\tilde{O}\left(\frac{n_xn_z}{\xi T}\right)$. In fact, a uniform / truncated Gaussian noise supported on a polytope always results in a $\tilde{O}\left(\frac{n_xn_z}{\xi T}\right)$ convergence rate (simply consider $\mathcal{C}$ to be the set of any non-extreme point on every facet of $\W$). Proofs of Corollary \ref{cor: qw=O(epsilon)} and all the examples can be found in Appendix \ref{app: cor}.

\begin{remark}
We first introduce Assumption \ref{ass: tight bound} then  Assumption \ref{ass: tbtl} because (i) we demonstrate the benefits of  Assumption \ref{ass: tbtl} by comparison with the classical one, (ii) Assumption \ref{ass: tight bound} and  Theorem \ref{thm1}  are  simpler, e.g. not involving SHS or $\xi$.  
\end{remark}

\section{Numerical Experiments}\label{sec: numerical}

This section provides numerical examples to test our theoretical bounds of SME in comparison with LSE's confidence regions as benchmarks. We consider two simulation settings: a randomly generated example (Section \ref{subsec: different xis}) and a linearized Boeing 747 model based on \cite{ishihara1992design} (Section \ref{subsec: boeing}).

\noindent\textbf{General setup for all experiments.} Throughout this section, we consider LSE confidence region provided in Theorem 1 of \cite{abbasi2011regret} as our benchmark. In particular, we adopt  a 95\% confidence level and a regularization parameter $\lambda = 10^{-3}$. Further, we use the trace of $w_t$'s covariance  matrix as a lower bound for the variance proxy of  $w_t$ \cite{arbel2020strict}. Therefore, the LSE confidence regions we plot here are smaller than those using the exact variance proxies.  For each experiment setting, we independently and identically generate $5$ sample trajectories of length $T = 2000$ with $x_0 = 0$, and $u_t$ identically and independently generated from the same distribution as $w_t$. The curves and the shaded areas in the plots respectively represent the empirical means and standard deviations.

\subsection{A Randomly Generated Example}\label{subsec: different xis} 

In this  experiment, we generate  $(A^*, B^*)$ with $n_x=n_u=2$ randomly. The generated parameters are
\begin{equation*}
    A^* = \begin{bmatrix}
        0.377 & -0.788\\
        -0.533 & 0.143
    \end{bmatrix},\ B^* = \begin{bmatrix}
        1.067 & -0.366\\
        0.520 & -0.480
    \end{bmatrix}.
\end{equation*}
All the trajectories adopt the same distribution of $w_t$: uniformly distributed on $\W=\mathbb{B}_2$, i.e. the unit $\ell_2$ ball. However, when implementing SME according to \eqref{equ: SME algo}, we use (i) the exact $\W$, and two tight outer-approximations of $\W$: (ii) a regular quadrilateral ($4$ linear constraints) circumscribing $\mathbb{B}_2$, and (iii) a regular hexadecagon ($16$ linear constraints) circumscribing $\mathbb{B}_2$. It can be shown that these three cases enjoy the same $p_w(\cdot)$ but different $\xi$. Further, we have $0 < \xi_4 < \xi_{16} <  1$, where $\xi_i$ denotes the projection constant $\xi$ for  SME with $i$-constrained $\W$. According to Example \ref{eg3}, $\xi=1$ for $\B_2$.

 Figure \ref{fig: different xis} compares the diameters of the uncertainty sets calculated by SME with the exact $\W$ or with the two outer-approximations of $\W$, and the 95\% confidence regions of LSE. It can be observed that SME  generates smaller uncertainty sets than LSE's confidence regions even when using tight outer-approximations, demonstrating the effectiveness of SME in reducing the parameter uncertainty. 

 \noindent\textbf{On the projection constant $\xi$.} One can observe from Figure \ref{fig: different xis} that the membership set trained with the exact noise bound (i.e. $\W = \mathbb{B}_2$) has the least diameter, and that the one trained with a hexadecagon (16 constraints) performs better than the one with a quadrilateral (4  constraints). Since $1>\xi_{16}>\xi_4$, Figure \ref{fig: different xis} shows that SME performs better with a larger $\xi$. This is consistent with our   Theorem \ref{thm2} and Corollary \ref{cor: qw=O(epsilon)}, which show a decreasing  diameter of SME  with respect to $\xi$.

\begin{figure}
    \centering
    \includegraphics[width=0.8\linewidth]{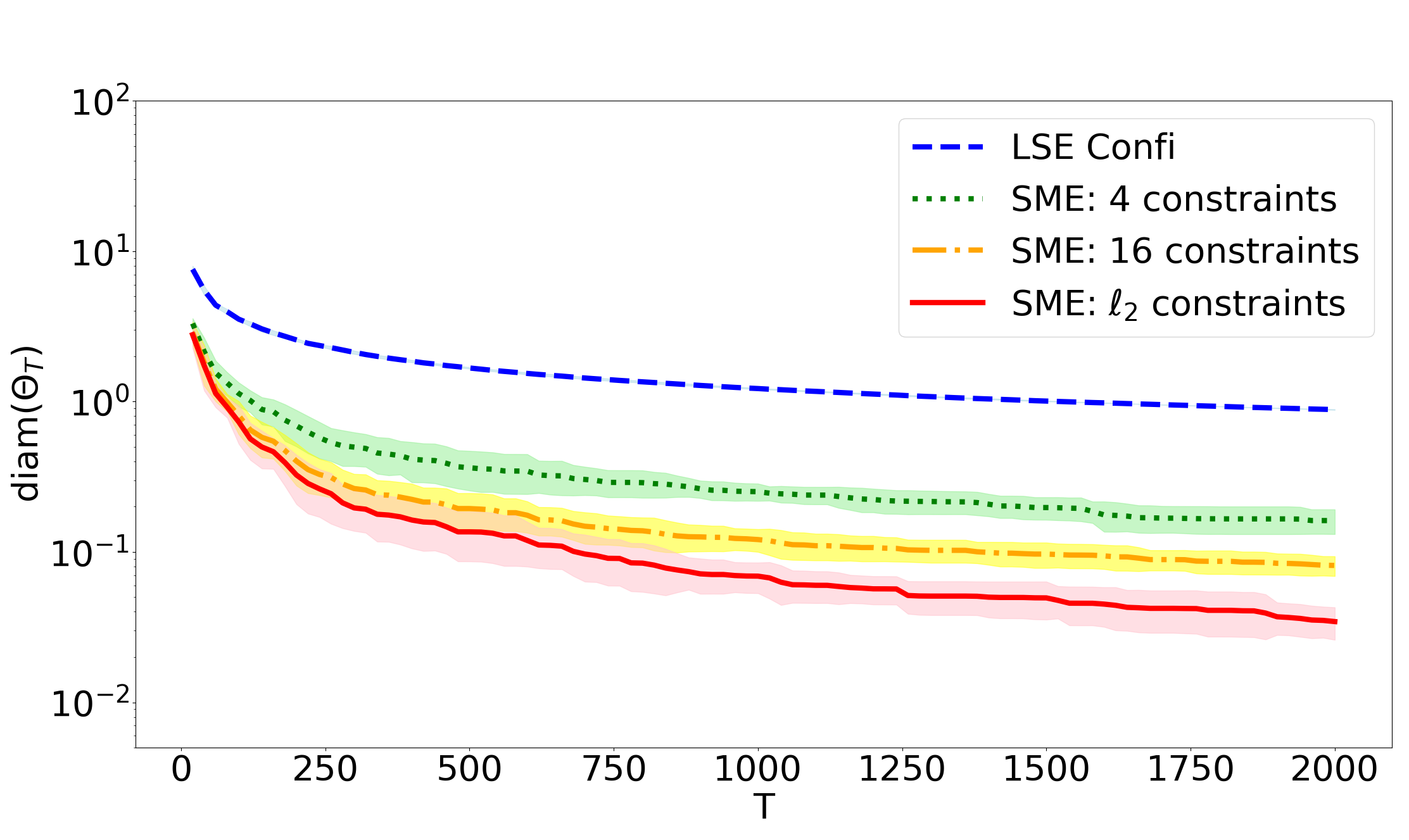}
    \caption{Comparion of LSE's  confidence regions  with SME with different $\W$ that enjoys same $p_w(\cdot)$ but different $\xi$.}
    \label{fig: different xis}
\end{figure}
\subsection{A Linearized Boeing 747 Flight Control System}\label{subsec: boeing}
In the following two numerical experiments, we consider identifying a linearized Boeing 747 flight control model with $n_x = 4$, $n_u=2$ based on \cite{ishihara1992design}. Besides, we always adopt the exact $\W$ when implementing the SME algorithm \eqref{equ: SME algo}.

The second numerical experiment assesses the performance of SME under noise bounds with different sizes (i.e. different $p_w(\cdot)$) but of the same shape (i.e. the same $\xi$). In particular, we consider  $w_t$ generated by two truncated standard Gaussian distributions, where all distributions are truncated from standard normal distribution but on different $\W$: $\ell_1$ balls with different radii $w_{\max}\in\{1,3\}$.

\noindent\textbf{On $p_w(\cdot)$.}
 Figure \ref{fig: different pws} compares SME   and LSE's confidence bounds when $w_t$ follows truncated standard Gaussian distributions with different sizes of $\W$. Figure \ref{fig: different pws} shows that SME generates smaller uncertainty sets with a smaller  truncation radius $w_{\max}$. Notice that $p_w(\cdot)$ will decrease as $w_{\max}$ increases in a truncated version of a standard Gaussian distribution. Therefore, the trend observed in Figure \ref{fig: different pws}   is  consistent with our 
Theorem \ref{thm2} and Corollary \ref{cor: qw=O(epsilon)}, which show that  the estimation errors of SME decrease with $p_w(\cdot)$. 
In addition, Figure \ref{fig: different pws} shows that LSE's confidence bounds also increase with $w_{\max}$. This is because  increasing $w_{\max}$ causes increasing variances of $w_t$, and thus causes increasing confidence bounds in \cite{abbasi2011regret}.

\begin{figure}
    \centering
\includegraphics[width=0.8\linewidth]{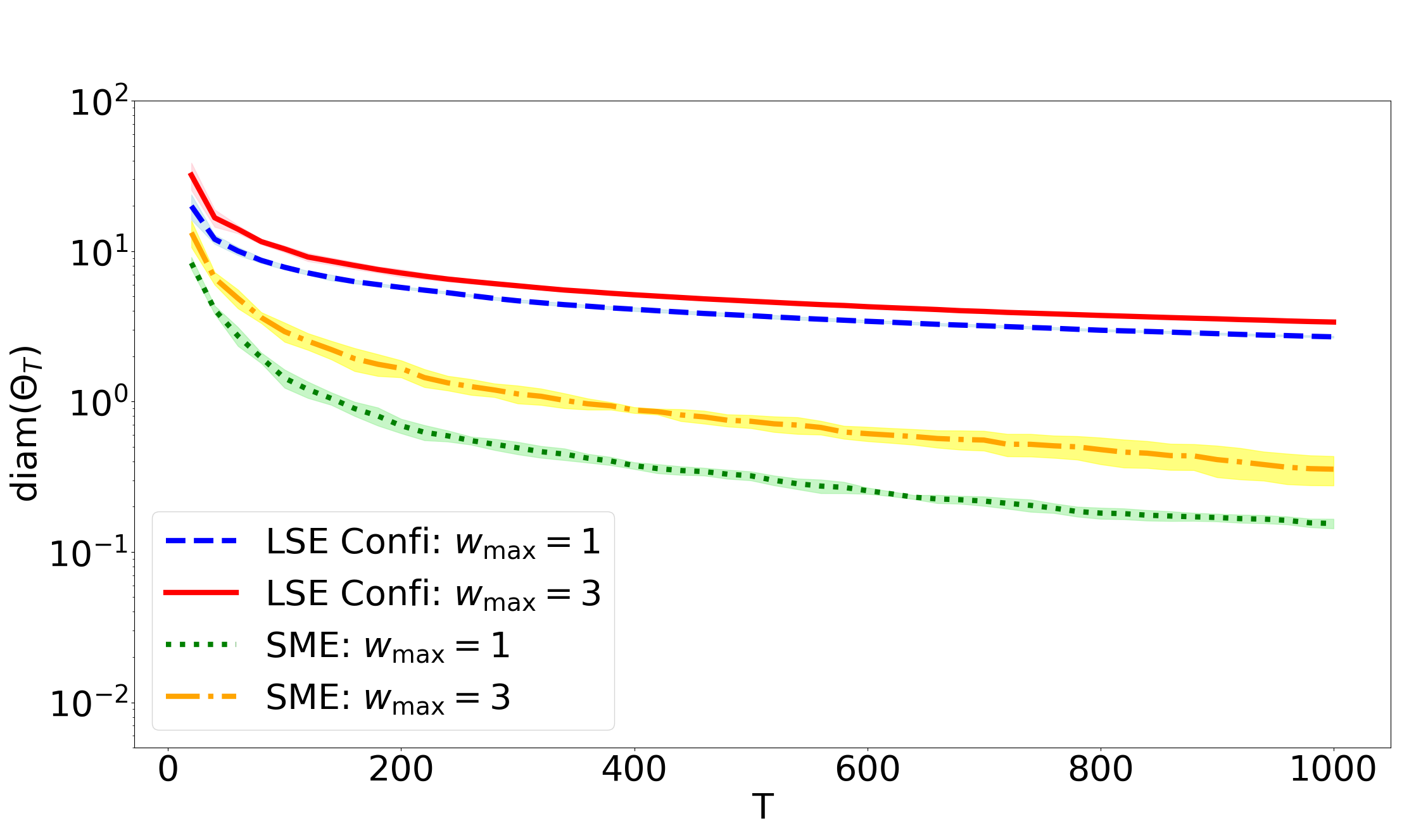}
    \caption{Comparison of SME and LSE's confidence regions for truncated standard Gaussian on $\ell_1$ balls with $w_{\max}=1,3$.}
    \label{fig: different pws}
\end{figure}

\noindent\textbf{Discussions on  distributions and  $\ell_2$ balls.}
 Figure \ref{fig: different supports} compares SME and LSE's confidence bounds under  four  distributions of $w_t$:   uniform distributions  on the unit $\ell_1$ ball $\mathbb{B}_1$ and the unit $\ell_2$ ball $\mathbb{B}_2$; and truncated standard Gaussian distributions on the unit $\ell_1$ ball $\mathbb{B}_1$ and the unit $\ell_2$ ball $\mathbb{B}_2$.  

 Comparing Figures \ref{fig:l1} and \ref{fig:l2}, SME outperforms LSE's confidence bounds in all cases. In addition, the trends are similar for both the uniform  and the truncated Gaussian case. 

 Finally, in  Figures \ref{fig:l1} and \ref{fig:l2}, we can observe that the differences in the SME's performances between $\ell_1$ and $\ell_2$ balls are not significant.
 This is interesting because our theoretical analysis can only guarantee a much worse bound for the $\ell_2$ case. Therefore, this numerical experiment indicates that, even though our theoretical bounds successfully  explains SME's good performances on polytopic $\W$, our convergence rates for the $\ell_2$ ball case are not tight and can be improved, which is an interesting future direction. 

\begin{figure}
     \centering
     \begin{subfigure}[t]{0.235\textwidth}
         \centering
         \includegraphics[width=\textwidth]{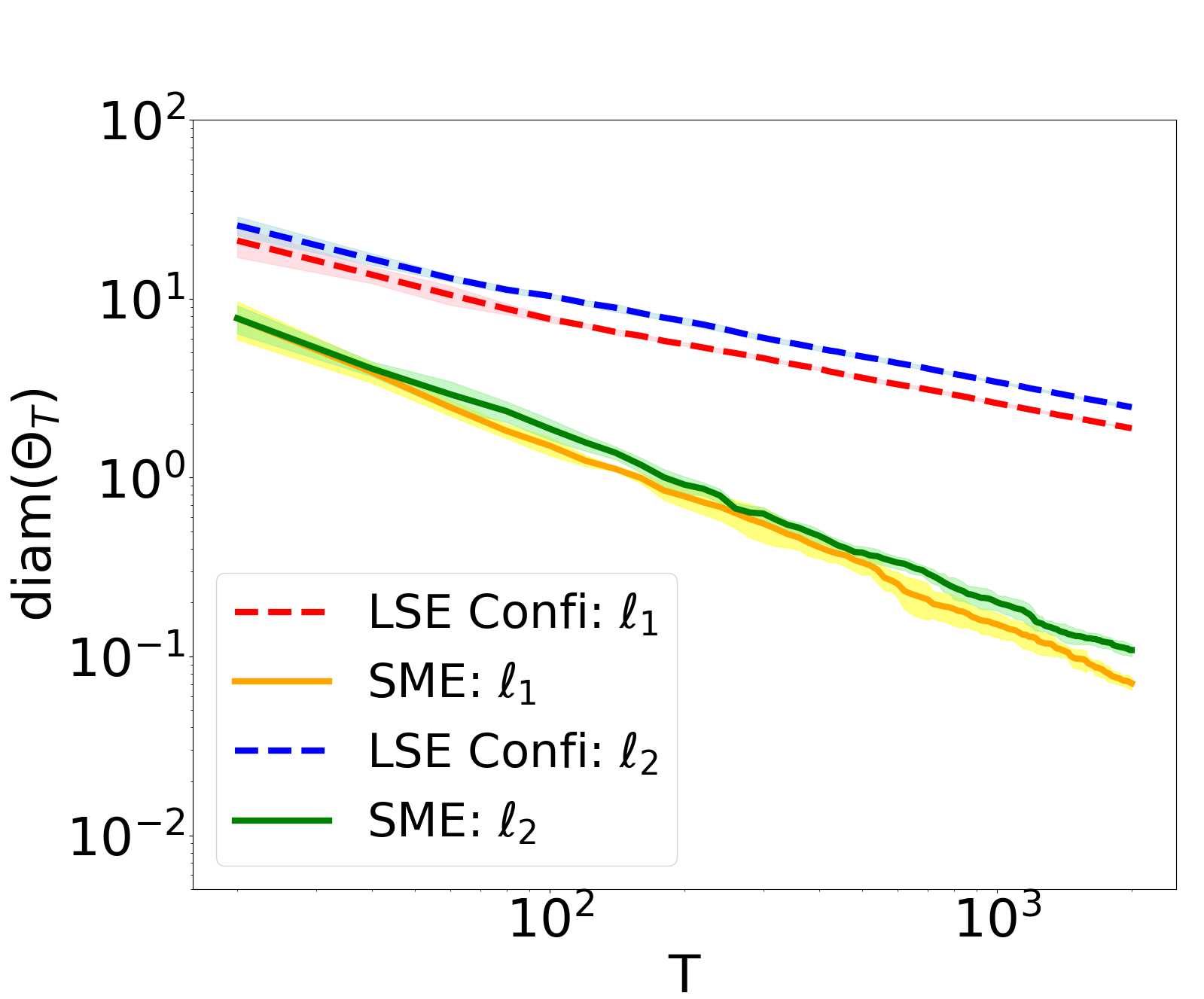}
         \caption{Uniform distributions.}
         \label{fig:l1}
     \end{subfigure}
     \hfill
     \begin{subfigure}[t]{0.235\textwidth}
         \centering
         \includegraphics[width=\textwidth]{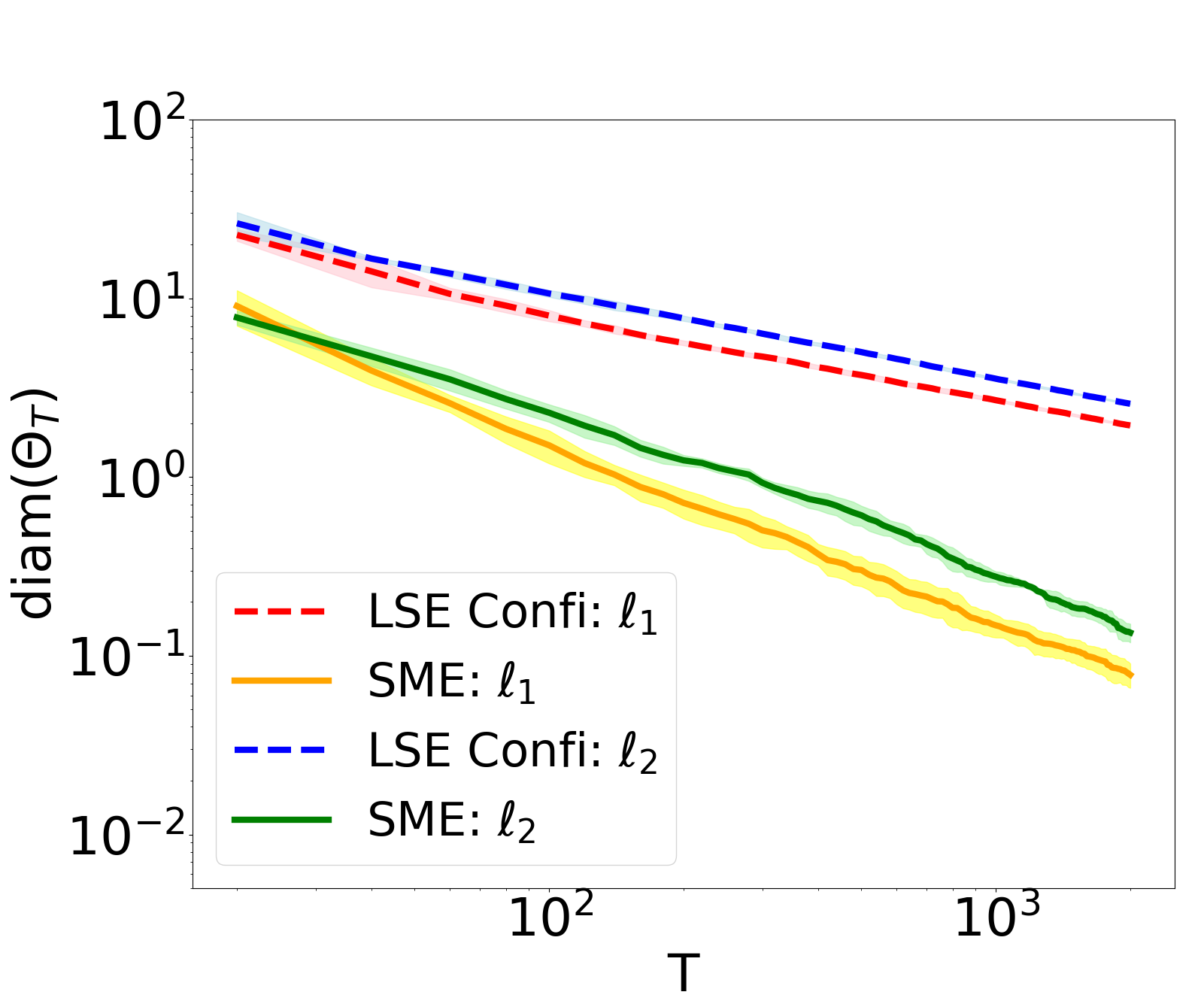}
         \caption{Truncated standard Gaussian.}
         \label{fig:l2}
     \end{subfigure}
     \caption{These two subfigures compare the LSE confidence bound with the SME under different noise distribution types (uniform and truncated standard Gaussian) and different noise supports ($\ell_1$ ball and $\ell_2$ ball).}
     \label{fig: different supports}
\end{figure}


\vspace{0.1cm}

\section{Conclusions and Future Directions}
This paper provides  non-asymptotic analysis for  set membership estimation for an unknown linear control system with  i.i.d. disturbances bounded by general convex and compact sets. We study both the classical assumption in \cite{lu2019robust} and propose a new relaxed assumption for better bounds. Future directions include: 1) improving the bound for $\ell_2$ balls; 2) non-asymptotic analysis of SME with non-stochastic disturbances; 3) proposing and analyzing more computationally efficient SME; 4) learning tight bounds $\W$ and its performance bounds;  5) fundamental limits; 6) regrets of robust adaptive controllers using SME, 7)  analyzing SME   under imperfect state observations, nonlinear systems, etc. 
\bibliographystyle{IEEEtran}
\bibliography{bibliography}


\appendix
\subsection*{List of Contents}
This appendix provides  proofs in the following order.
\begin{itemize}
    \item In Appendix \ref{app:thm1}, we provide a proof of Theorem \ref{thm1}.
    \item In Appendix \ref{app: thm2}, we provide a proof of Theorem \ref{thm2}.
    \item In Appendix \ref{app: xi}, we prove the strict positivity of $\xi$ as mentioned in the paragraph ``\textbf{on the projection constant}" after Theorem \ref{thm2}.
    \item In Appendix \ref{app: cor}, we prove Corollaries \ref{cor: qw=O(epsilon)1} and \ref{cor: qw=O(epsilon)}.
    \item In Appendices \ref{box} and \ref{2norm}, we respectively establish the asymptotic properties of $p_w(\cdot)$ and $q_w(\cdot)$ under $\ell_\infty$/$\ell_1$/$\ell_2$-bounded noise support settings.
    \item In Appendix \ref{app: epsilon}, we show the claim made in Figure \ref{fig: compare assumptions} that the $\epsilon$-ball is a subset of the $\epsilon$-slice with the same $\epsilon$.
\end{itemize}

\subsection{Proof of Theorem \ref{thm1}}\label{app:thm1}

For notational convenience, let $\Gamma_T$ denote the estimation error set of the membership set $\Theta_T$, i.e.,
\begin{equation*}
\Gamma_T := \Theta_T - \theta^* = \{(\hat{\theta}-\theta^*)\ :\ \hat{\theta}\in\Theta_T\}.
\end{equation*}
Since the translation by $-\theta^*$ is an isometry, we have $\diam(\Theta_T) = \diam(\Gamma_T)$.

In the following, we present a proof of Theorem \ref{thm1} in four steps.

\noindent \textbf{Step 1: Discussion on the persistent excitation.}

Since the PE condition is not assumed to always happen, but probabilistically, we need to consider two circumstances: (i) the PE condition is satisfied, and (ii) the PE condition is not satisfied. We define the following two events:
\begin{align*}
        &\mathcal{E}_1 := \left\{\exists\, \gamma\in\Gamma_T\ s.t.\ \vv \gamma\vv_F \geq\frac{\delta}{2}\right\},\\
        &\mathcal{E}_2 := \left\{\frac{1}{m}\sum_{s=1}^m z_{km+s}z^\top_{km+s}\succeq a_1^2 I_{n_z},\,\forall\, 0\leq k\leq \left\lceil\frac{T}{m}\right\rceil\!-\!1\right\},
\end{align*}
where $\mathcal{E}_1$ is the event that there exists a ``large error" in the estimation error set $\Gamma_T$. $\mathcal{E}_2$ is the event that the PE condition is satisfied. Next, we partition the event $\{\diam(\Theta_T)>\delta\}$ with $\mathcal{E}_1$ and $\mathcal{E}_2$. Namely, we can show that
\begin{equation*}
    \mathbb{P}\left(\diam(\Theta_T) > \delta\right)\leq \mathbb{P}\left(\mathcal{E}_1\cap\mathcal{E}_2\right) + \mathbb{P}\left(\mathcal{E}_2^\complement\right).
\end{equation*}
In other words, we cover the event $\{\diam(\Theta_T)>\delta\}$ by two events: (i) the event that the error estimation $\gamma$ is large and that the PE condition holds, and (ii) the event that the PE condition fails. To show this, we first prove that $\{\diam(\Theta_T)>\delta\}\subseteq\mathcal{E}_1$. To see this, we prove the contrapositive argument. Suppose $\mathcal{E}^\complement_1$ holds, i.e. $\forall\,\gamma\in\Gamma_T$, we have $\vv\gamma\vv_F < \frac{\delta}{2}$. It follows that $\forall\,\gamma_1,\gamma_2\in\Gamma_T$, we have $\vv\gamma_1 - \gamma_2\vv_F \leq \vv\gamma_1\vv_F + \vv\gamma_2\vv_F < \delta$. Taking supremum, we have $\diam(\Gamma_T) = \sup_{\gamma_1,\gamma_2\in\Gamma_T} \vv \gamma_1 - \gamma_2\vv_F \leq \delta$, which is $\{\diam(\Theta_T) > \delta\}^\complement$. It follows that $\mathcal{E}^\complement_1\subseteq \{\diam(\Theta_T) > \delta\}^\complement$. Thus, $\{\diam(\Theta_T) > \delta\}\subseteq\mathcal{E}_1$, and $\mathbb{P}(\diam(\Theta_T) > \delta) \leq \mathbb{P}(\mathcal{E}_1)$. Then, we show that $\mathbb{P}(\mathcal{E}_1)\leq\mathbb{P}(\mathcal{E}_1\cup\mathcal{E}_2) + \mathbb{P}(\mathcal{E}_2^\complement)$. To prove this, notice that $\mathcal{E}_1=(\mathcal{E}_1\cap\mathcal{E}_2) \cup (\mathcal{E}_1\cap\mathcal{E}_2^\complement)$. Moreover, since we have $\mathcal{E}_1\cap\mathcal{E}_2^\complement\subseteq \mathcal{E}_2^\complement$, it follows that $\mathcal{E}_1\subseteq (\mathcal{E}_1\cup\mathcal{E}_2)\cup\mathcal{E}_2^\complement$. By the sub-additivity of the probability measure, we have $\mathbb{P}(\diam(\Theta_T)>\delta) \leq \mathbb{P}(\mathcal{E}_1)\leq \mathbb{P}(\mathcal{E}_1\cup\mathcal{E}_2)+\mathbb{P}(\mathcal{E}_2^\complement)$.

Then, we aim to find an upper bound for each of the probabilities of $\mathcal{E}_1\cap\mathcal{E}_2$ and $\mathcal{E}^\complement_2$. These two upper bounds, when added up, serves as an upper bound for $\mathbb{P}(\diam(\Theta_T)>\delta$. It is shown in \cite{li2024setmembership} (Lemma 4.1) that
\begin{equation}\label{eq: lm41 in 14}
    \mathbb{P}(\mathcal{E}_2^\complement) \leq \textit{Term}\ 1\ in\ \eqref{eq4}.
\end{equation}
Therefore, to prove Theorem \ref{thm1}, we need to prove the following lemma.

\begin{lemma}\label{lm3}
If Assumptions \ref{ass:iid}, \ref{ass:bmsb}, and \ref{ass: tight bound} hold, then
\begin{equation*}
    \mathbb{P}\left(\mathcal{E}_1\cap\mathcal{E}_2\right)\leq \text{Term 2 in \eqref{eq4}}.
\end{equation*}
\end{lemma}

The rest of Appendix \ref{app:thm1} focuses on the proof of Lemma \ref{lm3}. Firstly, to prove Lemma \ref{lm3}, we employ a spatial discretization of the event $\mathcal{E}_1\cap\mathcal{E}_2$ (Step 2). Secondly, we employ a temporal discretization with respect to the PE window $m$ upon the previous discretization (Step 3). Finally, we combine the two discretizations and show the upper bound in Lemma \ref{lm3}.

\noindent\textbf{Step 2: Discretization on the space.}
Namely, we cover the event $\mathcal{E}_1\cap\mathcal{E}_2$ with a finite collection of sub-events, denoted $\{\tilde{\mathcal{E}}_{1,i}\}_i$, which is defined later in Claim \ref{claim4}. Before introducing these sub-events, we require a small-ball covering of the Frobenius-norm unit sphere, a stopping time, and an adapted process, which will be defined in the following paragraphs.

Firstly, we consider a small ball covering of the unit $n_x\times n_z$-dimensional unit Frobenius-norm sphere $\mathbb{S}^F_1(0):=\{\bar\gamma\in\mathbb{R}^{n_x\times n_z}\ :\ \vv \bar\gamma\vv_F = 1\}$. We consider covering $\mathbb{S}_1^F(0)$ by smaller balls with radius $\tilde{\epsilon}_\gamma = \frac{1}{a_4} = \min\left\{1, \frac{\sigma_z p_z}{16 b_z}\right\}$ where $\sigma_z,p_z,b_z$ are defined in Assumption \ref{ass:bmsb}, and denote the set of covering $\tilde{\epsilon}_\gamma$-balls to be $\{\tilde{\mathcal{B}}_{i}\}_{i=1}^{\Omega_\gamma}$ (i.e. $\bigcup_{i=1}^{\Omega_\gamma} \tilde{\mathcal{B}}_{i} \supseteq \mathbb{S}_1^F(0)$). Here, $\Omega_\gamma$ is the number of small $\tilde{\epsilon}_\gamma$-balls required to cover the sphere. Let $\tilde{\mathcal{M}} = \{\tilde{\gamma}_i\}_{i=1}^{\Omega_\gamma}$ such that $\forall\,i\in\{1,\cdots, \Omega_\gamma\}$, $\tilde{\gamma}_i\in\mathbb{S}_1^F(0)\cap\tilde{\mathcal{B}}_i$ be a $\tilde{\epsilon}_\gamma$-mesh of $\mathbb{S}_1^F(0)$. Thus, we have $\forall\, \bar\gamma\in\mathbb{S}^F_1(0)$, $\exists \tilde{\gamma}_i\in\tilde{\mathcal{M}}$ such that $\vv \tilde{\gamma}_i - \bar\gamma\vv_F \leq 2 \tilde{\epsilon}_\gamma$. For the theory of covering number, we refer the reader to \cite{c10}, \cite{c11} and Appendix D.1 in \cite{li2024setmembership}. Here we have:
\begin{equation}\label{eq: covering1}
    \Omega_\gamma \leq \Tilde{O}\left((n_xn_z)^{5/2}\right)a_4^{n_xn_z}.
\end{equation}
Secondly, define a stopping time to be used in Claim \ref{claim4}.
\begin{equation*}
    \tilde{L}_{i,k}:=\min\left\{m+1,\min\left\{l\geq 1\ :\ \vv\tilde{\gamma}_iz_{km+l}\vv_2\geq a_1\right\}\right\}.
\end{equation*}
Thirdly, $\forall\, t\geq 0$, define a process $\{\tilde{v}_{i,t}\}_{t\geq 0}$ adapted to the natural filtration of the process $\{z_t\}_{t\geq 0}$
\begin{equation*}
    \tilde{v}_{i,t} := \arg\max_{v\in \mathbb{S}^2_1(0)} v^\top (\tilde{\gamma}_i z_t),
\end{equation*}
where $\mathbb{S}^2_1(0):=\{v\in\mathbb{R}^{n_x}\ :\ \vv v\vv_2 = 1\}$. 

Next, with the help of $\tilde{L}_{i,k}$ and $\tilde{v}_{i,t}$, we cover the event $\mathcal{E}_1\cap\mathcal{E}_2$ with sub-events with respect to the points in the $\frac{1}{a_4}$-net of $\mathbb{S}_1^F(0)$ with the following claim.

\noindent
\begin{claim}\label{claim4}
    $\forall\, i\in\{1,\cdots, \Omega_\gamma\}$, define
\begin{align*}
    \tilde{\mathcal{E}}_{1,i}:=&\left\{\exists\, \gamma\in\Gamma_T\ \text{such that}\ \forall\, k\in\left\{0,\cdots, \left\lceil \frac{T}{m}\right\rceil -1\right\},\right.\\
    &\left.\tilde{v}^\top_{i,km+\tilde{L}_{i,k}}(\gamma z_{km+\tilde{L}_{i,k}})\geq\frac{a_1\delta}{4}\right\}\cap\mathcal{E}_2.
\end{align*}
Then
\begin{equation}\label{eq: events covered}
\mathcal{E}_1\cap\mathcal{E}_2\subseteq \bigcup_{i=1}^{\Omega_\gamma}\tilde{\mathcal{E}}_{1,i}.
\end{equation}
Thus,
\begin{equation}\label{ineq: claim11}
     \mathbb{P}(\mathcal{E}_1\cap\mathcal{E}_2)\leq \Tilde{O}\left((n_xn_z)^{5/2}\right)a_4^{n_xn_z}\max_{1\leq i\leq \Omega_\gamma}\mathbb{P}(\tilde{\mathcal{E}}_{1,i}).
\end{equation}
\end{claim}

\noindent
\begin{proof}[Proof of Claim \ref{claim4}] 

We first prove \eqref{eq: events covered} by showing that the event $\mathcal{E}_1\cap\mathcal{E}_2$ implies $\bigcup_{i=1}^{\Omega_\gamma}\tilde{\mathcal{E}}_{1,i}$. We then prove \eqref{ineq: claim11} by the sub-additivity of the probability measure.

To prove \eqref{eq: events covered}, given $\mathcal{E}_1\cap\mathcal{E}_2$ holds, i.e. $\exists\,\gamma\in\Gamma_T$ such that $\vv\gamma\vv_F\geq\frac{\delta}{2}$, and $\forall\,k\in\left\{0,1,\cdots,\left\lceil\frac{T}{m}\right\rceil - 1\right\}$, $\frac{1}{m}\sum_{s=1}^mz_{km+s}z^\top_{km+s}\succeq a_1^2 I_{n_z}$. Consider unit directional vector of $\gamma$ denoted by $\bar{\gamma}$:
\begin{equation*}
    \bar\gamma := \frac{\gamma}{\vv\gamma\vv_F} \in\mathbb{S}_1^F(0).
\end{equation*}
By the small ball covering theory (\cite{c10}, \cite{c11} and Appendix D.1 in \cite{li2024setmembership}), $\bar\gamma$ must be close to some point in the $\frac{1}{a_4}$-mesh, i.e. $\exists\, \tilde{\gamma}_i\in\tilde{\mathcal{M}}$ such that $\vv\bar\gamma-\tilde{\gamma}_i\vv_F\leq\frac{2}{a_4}$. Meanwhile, since $\forall\,k\in\left\{0,1,\cdots,\left\lceil\frac{T}{m}\right\rceil - 1\right\}$, $\frac{1}{m}\sum_{s=1}^mz_{km+s}z^\top_{km+s}\succeq a_1^2 I_{n_z}$, by multiplying $\tilde{\gamma}_i$ on the left, multiplying $\tilde{\gamma}_i^\top$ on the right, and then taking matrix traces, we have
\begin{equation*}
    \frac{1}{m}\sum_{s=1}^m \vv\tilde{\gamma}_i z_{km+s}\vv_2^2\geq a_1^2.
\end{equation*}
Therefore, 
by the Pigeonhole Principle, for any $i,k$, there  exists $s\in\{1,\cdots,m\}$ such that $\vv\tilde{\gamma}_i z_{km+s}\vv_2\geq a_1$. Notice that the least of such $s$ is indeed the stopping time $\tilde{L}_{i,k}$. This also implies that $\forall\,i,k$, $\tilde{L}_{i,k}\leq m$.
It follows that
\begin{align}\label{ineq:claim1}
    &\tilde{v}^\top_{i,km+\tilde{L}_{i,k}}(\bar\gamma z_{km+\tilde{L}_{i,k}})\nonumber\\
    =&\tilde{v}^\top_{i,km+\tilde{L}_{i,k}}\left\{\left[\tilde{\gamma}_i - (\tilde{\gamma}_i -\bar\gamma)\right]z_{km+\tilde{L}_{i,k}}\right\}\nonumber\\
    =&\tilde{v}^\top_{i,km+\tilde{L}_{i,k}}(\tilde{\gamma}_i z_{km+\tilde{L}_{i,k}}) - \tilde{v}^\top_{i,km+\tilde{L}_{i,k}}\left[(\tilde{\gamma}_i-\bar\gamma)z_{km+\tilde{L}_{i,k}}\right]\nonumber\\
    \geq&a_1 - \vv\tilde{\gamma}_i -\bar\gamma\vv_2\cdot\vv z_{km+\tilde{L}_{i,k}}\vv_2\nonumber\\
    \geq&a_1 - \frac{2b_z}{a_4}\nonumber\\
    \geq&\frac{a_1}{2}.
\end{align}
Recall that we can rewrite $\gamma$ as its norm multiplying its unit directional vector, i.e., $\gamma = \bar\gamma\vv\gamma\vv_F$. By multiplying \eqref{ineq:claim1} by $\vv\gamma\vv_F$, and by that $\vv\gamma\vv_F \geq \frac{\delta}{2}$, we obtain that there exists $i\in\{1,\cdots, \Omega_\gamma\}$ such that $\forall k\in\left\{0,\cdots, \left\lceil\frac{T}{m}\right\rceil-1\right\}$
\begin{equation}\label{eq:claim1}
    \tilde{v}^\top_{i,km+\tilde{L}_{i,k}}(\gamma z_{km+\tilde{L}_{i,k}})\geq\frac{a_1\delta}{4}.
\end{equation}
It follows that $\mathcal{E}_1\cap\mathcal{E}_2$ implies $\bigcup_{i=1}^{\Omega_\gamma}\tilde{\mathcal{E}}_{1,i}$, which is \eqref{eq: events covered}.

Secondly, we prove \eqref{ineq: claim11}. Since we have $\mathcal{E}_1\cap\mathcal{E}_2\subseteq \bigcup_{i=1}^{\Omega_\gamma}\tilde{\mathcal{E}}_{1,i}$, then, by the sub-additivity of the probability measure, we have
\begin{equation*}
    \mathbb{P}\left(\mathcal{E}_1\cap\mathcal{E}_2\right) \leq \sum_{i=1}^{\Omega_\gamma}\mathbb{P}\left(\tilde{\mathcal{E}}_{1,i}\right)\leq \Omega_\gamma\max_{i\in\{1,\cdots,\Omega_\gamma\}}\mathbb{P}(\tilde{\mathcal{E}}_{1,i}).
\end{equation*}
By \eqref{eq: covering1}, the above inequality implies \eqref{ineq: claim11}, which concludes this proof.
\end{proof}

Based on Claim \ref{claim4}, we aim to find a uniform upper bound for $\left\{\mathbb{P}\left(\tilde{\mathcal{E}}_{1,i}\right)\right\}_{i=1}^{\Omega_\gamma}$. To address this problem, we provide a temporal discretization with respect to the $k$ time segments with length $m$.

\noindent\textbf{Step 3: Discretization on time.} Specifically, we partition the time $[0,T]$ into segments with length $m$, and construct one event per time segment such that the intersection of all these events form a superset of $\tilde{\mathcal{E}}_{1,i}$. In the following claim, we prove that $\tilde{\mathcal{E}}_{1,i}$ can be covered by a sequence of subevents $\{\tilde{G}_{i,k}\}_{k=0}^{\left\lceil\frac{T}{m}\right\rceil - 1}$ to be defined below.
\begin{claim}\label{claim5}
    $\forall\, i\in\{1,\cdots,\Omega_\gamma\}, k\in\left\{0,\cdots,\left\lceil \frac{T}{m}\right\rceil - 1\right\}$, define
\begin{equation*}
    w^0_{i,km+\tilde{L}_{i,k}}:= \arg\min_{w\in\mathbb{W}}w^\top\tilde{v}_{i,km+\tilde{L}_{i,k}}.
\end{equation*}
and
\begin{equation*}
    \tilde{G}_{i,k}:=\left\{\vv w_{km+\tilde{L}_{i,k}}-w^0_{i,km+\tilde{L}_{i,k}}\vv_2\geq\frac{a_1\delta}{4}\right\}\cap\mathcal{E}_2.
\end{equation*}
Then $\forall\, i\in\{1,\cdots,\Omega_\gamma\}$, we have
    \begin{equation}\label{eq: claim12}
    \tilde{\mathcal{E}}_{1,i}\subseteq \bigcap_{k=0}^{\left\lceil \frac{T}{m}\right\rceil - 1}\tilde{G}_{i,k}.
    \end{equation}
\end{claim}

\noindent
\begin{proof}[Proof of Claim \ref{claim5}]
    Recall that by the algorithm \eqref{equ: SME algo}, $\forall\, t\geq 0$,
\begin{equation*}
    w_t-\gamma z_t = x_{t+1}-\hat\theta z_t\in\mathbb{W}.
\end{equation*}
Since $\mathbb{W}$ is convex, compact, and has a non-empty interior, by the Supporting Hyperplane Theorem, if we denote
\begin{equation*}
    h(v):= \min_{w\in\mathbb{W}} v^\top w,
\end{equation*}
then we can represent $\mathbb{W}$ in the following manner.
\begin{equation*}
    \mathbb{W} = \{w\in\mathbb{R}^{n_x}\ :\ \forall\, v\in\mathbb{S}_1^2(0),\ v^\top w\geq h(v)\}.
\end{equation*}
Therefore, $\forall v\in\mathbb{S}_1^2(0)$, $v^\top(w_t - \gamma z_t)\geq h(v)$. Substituting $v$ by $\tilde{v}_{i,km+\tilde{L}_{i,k}}$, we have
\begin{equation*}
    \tilde{v}^\top_{i,km+\tilde{L}_{i,k}}(w_{km+\tilde{L}_{i,k}} -\gamma z_{km+\tilde{L}_{i,k}})\geq h(\tilde{v}_{i,km+\tilde{L}_{i,k}}).
\end{equation*}
It follows that, when $\tilde{\mathcal{E}}_{1,i}$ holds, by \eqref{eq:claim1}, we have
\begin{align}\label{ineq: claim2}
    &\tilde{v}^\top_{i,km+\tilde{L}_{i,k}}w_{km+\tilde{L}_{i,k}}-h(\tilde{v}_{i,km+\tilde{L}_{i,k}})\nonumber\\
    \geq &\tilde{v}^\top_{i,km+\tilde{L}_{i,k}}(\gamma z_{km+\tilde{L}_{i,k}})\nonumber\\
    \geq &\frac{a_1\delta}{4}.
\end{align}
Since $\mathbb{W}$ is compact and $v^\top w$ is a linear functional of $w$ with any fixed $v$, then there exists a minimizer for $v^\top w$ among the boundary points of $\mathbb{W}$ (i.e., $\exists\,w^*\in\W$ such that $h(v) = v^\top w^*$). For the case of $v=\tilde{v}_{i,km+\tilde{L}_{i,k}}$, we denote such a minimizer by $w^0_{i,km+\tilde{L}_{i,k}}$. That is, for all $i\in\{1,\cdots, \Omega_\gamma\}$, $k\in\left\{0,\cdots,\left\lceil \frac{T}{m}\right\rceil - 1\right\}$,
\begin{equation}\label{eq: claim2}
    w^0_{i,km+\tilde{L}_{i,k}}\in \left(\arg\min_{w\in\mathbb{W}} w^\top \tilde{v}_{i,km+\tilde{L}_{i,k}}\cap\partial\mathbb{W}\right)\neq\emptyset.
\end{equation}
Though such a minimizer may not be unique, yet we can take any one of them, as long as it satisfies \eqref{eq: claim2}. It follows that
\begin{equation*}
    h(\tilde{v}_{i,km+\tilde{L}_{i,k}}) = \tilde{v}^\top_{i,km+\tilde{L}_{i,k}}w^0_{i,km+\tilde{L}_{i,k}}.
\end{equation*}
This implies
\begin{align*}
    \frac{a_1\delta}{4} &\overset{(a)}{\leq} |\tilde{v}^\top_{i,km+\tilde{L}_{i,k}}(w_{km+\tilde{L}_{i,k}} - w^0_{i,km+\tilde{L}_{i,k}})|\\
    &\overset{(b)}{\leq} \vv \tilde{v}_{i,km+\tilde{L}_{i,k}}\vv_2\cdot\vv w_{km+\tilde{L}_{i,k}} - w^0_{i,km+\tilde{L}_{i,k}}\vv_2\\
    &\overset{(c)}{\leq} \vv w_{km+\tilde{L}_{i,k}} - w^0_{i,km+\tilde{L}_{i,k}}\vv_2,
\end{align*}
where (a) is deduced by \eqref{ineq: claim2} and \eqref{eq: claim2}; (b) is a direct application of the Cauchy-Schwartz Inequality; (c) is implied by the fact that $\vv \tilde{v}_{i,km+\tilde{L}_{i,k}}\vv_2 = 1$. Thus, if we define events $\{\tilde{G}_{i,k}\}_{k=0}^{\lceil\frac{T}{m}\rceil-1}$ in the following manner
\begin{equation*}
    \tilde{G}_{i,k}:=\left\{\vv w_{km+\tilde{L}_{i,k}}-w^0_{i,km+\tilde{L}_{i,k}}\vv_2\geq\frac{a_1\delta}{4}\right\}\cap\mathcal{E}_2,
\end{equation*}
then we can deduce that $\forall\, i\in\{1,\cdots,\Omega_\gamma\}$, we have
\begin{equation*}
    \tilde{\mathcal{E}}_{1,i}\subseteq \bigcap_{k=0}^{\left\lceil \frac{T}{m}\right\rceil - 1} \tilde{G}_{i,k}.
\end{equation*}
\end{proof}
With Claim \ref{claim5} proved, we provide an upper bound for $\mathbb{P}(\tilde{\mathcal{E}}_{1,i})$ by providing an upper bound for $\mathbb{P}(\bigcap_{k=0}^{\left\lceil\frac{T}{m}\right\rceil - 1}\tilde{G}_{i,k})$ with Claim \ref{claim6}.
\noindent
\begin{claim}\label{claim6}
    For any $i\in\{1,\cdots, \Omega_\gamma\}$:
\begin{equation*}
    \mathbb{P}\left(\bigcap_{k=0}^{\left\lceil \frac{T}{m}\right\rceil - 1} \tilde{G}_{i,k}\right)\leq \left(1 - q_w\left(\frac{a_1\delta}{4}\right)\right)^{\left\lceil\frac{T}{m}\right\rceil - 1}.
\end{equation*}
\end{claim}

\begin{proof}[Proof of Claim \ref{claim6}]
    By Bayes' Theorem, $\forall\, i$:
\begin{equation*}
    \mathbb{P}\left(\bigcap_{k=0}^{\left\lceil \frac{T}{m}\right\rceil - 1} \tilde{G}_{i,k}\right) = \mathbb{P}\left(\tilde{G}_{i,0}\right)\displaystyle\prod_{k=1}^{\left\lceil \frac{T}{m}\right\rceil - 1}\mathbb{P}\left(\tilde{G}_{i,k}\middle| \bigcap_{\ell=0}^{k-1}\tilde{G}_{i,\ell}\right).
\end{equation*}
Recall that $\mathcal{E}_2\subseteq \left\{1\leq \tilde{L}_{i,k} \leq m,\ \forall\,i,k\right\}$. Hence, $\tilde{G}_{i,k}\subseteq \left\{\vv w_{km+\tilde{L}_{i,k}}-w^0_{i,km+\tilde{L}_{i,k}}\vv_2\geq\frac{a_1\delta}{4}\text{ and }1\leq \tilde{L}_{i,k} \leq m\right\}$. For any $k\in\left\{2,3,\cdots,\left\lceil\frac{T}{m}\right\rceil - 1\right\}$, we have

\begin{align*}
    &\mathbb{P}\left(\tilde{G}_{i,k}\middle|\bigcap_{j=0}^{k-1}\tilde{G}_{i,j}\right)\\
    \leq&\mathbb{P}\left(\vv w_{km+\tilde{L}_{i,k}} - w^0_{i,km+\tilde{L}_{i,k}}\vv_2\geq\frac{a_1\delta}{4};\right.\\
    &\left. 1\leq \tilde{L}_{i,k}\leq m\middle| \bigcap_{j=0}^{k-1}\tilde{G}_{i,j}\right)\\
    =&\sum_{l=1}^m\!\mathbb{P}\!\left(\!\vv w_{km+l} - w^0_{i,km+l}\vv_2\geq\frac{a_1\delta}{4};\ \tilde{L}_{i,k}=l \middle| \bigcap_{j=0}^{k-1}\tilde{G}_{i,j}\right)\\
    = &\sum_{l=1}^m\left[\mathbb{P}\left(\vv w_{km+l} - w^0_{i,km+l}\vv_2\geq\frac{a_1\delta}{4}\middle| \tilde{L}_{i,k}=l, \bigcap_{j=0}^{k-1}\tilde{G}_{i,j}\right)\right.\\
    &\left.\times\mathbb{P}\left(\tilde{L}_{i,k}=l\middle| \bigcap_{j=0}^{k-1}\tilde{G}_{i,j}\right)\right].
\end{align*}
The first equation is deduced by the law of total probability, and the second equation is deduced by Bayes' theorem. Meanwhile, notice that
\begin{align*}
    &\mathbb{P}\left(\vv w_{km+l} - w^0_{i,km+l}\vv_2\geq\frac{a_1\delta}{4}\middle| \tilde{L}_{i,k}=l, \bigcap_{j=0}^{k-1}\tilde{G}_{i,j}\right)\\
    \overset{(d)}{=}&\int_{v_{0:km+l}}\mathbb{P}\left(\vv w_{km+l}-w^0_{i,km+l}\vv_2\geq\frac{a_1\delta}{4},\right.\\
    &\left.w_{0:km+l}=v_{0:km+l}\middle| \tilde{L}_{i,k}=l,\bigcap_{j=0}^{k-1}\tilde{G}_{i,j}\right)dv_{0:km+l}\\
    \overset{(e)}{=}&\int_{\tilde{Q}_{k,l}}\!\left[\!\mathbb{P}\!\left(\!\vv w_{km+l}-w^0_{i,km+l}\vv_2\!\geq\!\frac{a_1\delta}{4}\middle|\! w_{0:km+l}=v_{0:km+l}\right)\right.\\
    &\left.\times \mathbb{P}\left(w_{0:km+l}=v_{0:km+l}\middle| \tilde{L}_{i,k}=l,\bigcap_{j=0}^{k-1}\tilde{G}_{i,j}\right)\right]dv_{0:km+l}\\
    \leq&\left(1-q_w\left(\frac{a_1\delta}{4}\right)\right)\times\\
    &\int_{\tilde{Q}_{k,l}}\mathbb{P}\left(w_{0:km+l}=v_{0:km+l}\middle| \tilde{L}_{i,k}=l,\bigcap_{j=0}^{k-1}\tilde{G}_{i,j}\right)dv_{0:km+l}\\
    =&1-q_w\left(\frac{a_1\delta}{4}\right),
\end{align*}
where (d) is deduced by the law of total probability, and (e) is deduced by Bayes' theorem. Specially, we denote a sequence of noise (or its realization) by
\begin{equation*}
    w_{0:km+l}:=\{w_0,w_1,\cdots, w_{km+l-1}\},
\end{equation*}
and the domain of integration by
\begin{align*}
    \tilde{Q}_{k,l}:=&\{v_{0:km+l}\text{ such that }w_{0:km+l}=v_{0:km+l}\text{ satisfies }\\
    &\tilde{L}_{i,k}=l\text{ and }\bigcap_{j=1}^{k-1}\tilde{G}_{i,j}\}.
\end{align*}
This follows that
\begin{align*}
    &\mathbb{P}\left(\tilde{G}_{i,k}\middle|\bigcap_{j=0}^{k-1}\tilde{G}_{i,j}\right)\\
    \leq&\left(1-q_w\left(\frac{a_1\delta}{4}\right)\right)\sum_{l=1}^m\mathbb{P}\left(\tilde{L}_{i,k}=l\middle|\bigcap_{j=0}^{k-1}\tilde{G}_{i,j}\right)\\
    =&1-q_w\left(\frac{a_1\delta}{4}\right).
\end{align*}
By a similar procedure, we can also deduce that
\begin{equation*}
    \mathbb{P}\left(\tilde{G}_{i,0}\right) \leq 1-q_w\left(\frac{a_1\delta}{4}\right).
\end{equation*}
It follows that 
\begin{equation}\label{ineq: claim22}
    \mathbb{P}\left(\bigcap_{k=0}^{\left\lceil \frac{T}{m}\right\rceil - 1} \tilde{G}_{i,k}\right)\leq \left(1 - q_w\left(\frac{a_1\delta}{4}\right)\right)^{\left\lceil\frac{T}{m}\right\rceil - 1}.
\end{equation}
\end{proof}

\noindent
\textbf{Step 4: The final assembly.}
Combining \eqref{eq: events covered}, \eqref{eq: claim12}, and \eqref{ineq: claim22}, we can prove Lemma \ref{lm3}. Combining Lemma \ref{lm3} with \ref{eq: lm41 in 14}, we prove Theorem \ref{thm1}.

\subsection{Proof of Theorem \ref{thm2}}\label{app: thm2}
We present a proof of Theorem \ref{thm2} that follows a similar scheme as the proof of Theorem \ref{thm1} in Appendix \ref{app:thm1} except (i) we use the supporting half-spaces defined in Definition \ref{def: shs} to construct events $\{G_{i,k}\}_{k=0}^{\left\lceil\frac{T}{m}\right\rceil}$ in Claim \ref{claim5_tl}; and (ii) we use the thin-slice boundary visiting assumption (Assumption \ref{ass: tbtl}) to find an upper bound for the 
probability of $\bigcap_{k=0}^{\left\lceil\frac{T}{m}\right\rceil - 1}G_{i,k}$ in Claim \ref{claim6_tl}. Recall that we let $\Gamma_T$ denote the estimation error set of the membership set 
$\Theta_T$, i.e. $\Gamma_T := \Theta_T - \theta^* = \{(\hat{\theta}-\theta^*)\ :\ \hat{\theta}\in\Theta_T\}$, and we have $\diam(\Theta_T) = \diam(\Gamma_T)$. In the following, we present a proof of Theorem \ref{thm2} in four steps.

\noindent \textbf{Step 1: Discussion on the persistent excitation.}

Since the PE condition is not assumed to always happen, but probabilistically, we need to consider two circumstances: (i) the PE condition is satisfied, and (ii) the PE condition is not satisfied. We define the following two events:
\begin{align*}
        &\mathcal{E}_1 := \left\{\exists\, \gamma\in\Gamma_T\ s.t.\ \vv \gamma\vv_F \geq\frac{\delta}{2}\right\},\\
        &\mathcal{E}_2 := \left\{\frac{1}{m}\sum_{s=1}^m z_{km+s}z^\top_{km+s}\succeq a_1^2 I_{n_z},\,\forall\, 0\leq k\leq \left\lceil\frac{T}{m}\right\rceil\!-\!1\right\}.
\end{align*}

With exactly the same reasoning as in Step 1 of Appendix \ref{app:thm1}, we can show that
\begin{equation}\label{eq:split}
    \mathbb{P}\left(\diam(\Theta_T) > \delta\right)\leq \mathbb{P}\left(\mathcal{E}_1\cap\mathcal{E}_2\right) + \mathbb{P}\left(\mathcal{E}_2^\complement\right).
\end{equation}
In other words, we cover the event $\{\diam(\Theta_T)>\delta\}$ by two events: (i) the event that the error is large and that the PE condition holds, and (ii) the event that the PE condition fails.

Then, we aim to find an upper bound for each of the probabilities of the two events. These two upper bounds, when added up, serves as an upper bound for $\mathbb{P}(\diam(\Theta_T)>\delta$. It is shown in \cite{li2024setmembership} (Lemma 4.1) that $\mathbb{P}(\mathcal{E}_2^\complement) \leq \textit{Term}\ 1\ in\ \eqref{eqtl}$. Therefore, to prove Theorem \ref{thm2}, what it remains to be shown is the following lemma.

\begin{lemma}\label{lm3_tl}
If Assumptions \ref{ass:iid}, \ref{ass:bmsb}, and \ref{ass: tbtl} hold, then
\begin{equation*}
    \mathbb{P}\left(\mathcal{E}_1\cap\mathcal{E}_2\right)\leq \text{Term 3 in \eqref{eqtl}}.
\end{equation*}
\end{lemma}

The remaining of this proof focuses on showing Lemma \ref{lm3_tl}. Firstly, Step 2 provides a spatial discretization of the event $\mathcal{E}_1\cap\mathcal{E}_2$. Secondly, Step 3 provides a temporal discretization with respect to the PE window $m$ upon the previous step. Finally, we combine the two discretizations and show the upper bound in Lemma \ref{lm3_tl}.

\noindent\textbf{Step 2: Discretization on the space.}
Namely, we cover the event $\mathcal{E}_1\cap\mathcal{E}_2$ with a finite collection of sub-events denoted $\{\mathcal{E}_{1,i}\}_i$, which is defined later in Claim \ref{claim4_tl}.

Before introducing the sub-events, recall the small ball covering theory introduced in \cite{c10}, \cite{c11} and Appendix D.1 in \cite{li2024setmembership}. We consider covering the unit $n_x\times n_z$-dimensional unit Frobenius-norm sphere $\mathbb{S}_1^F(0)$ by smaller balls with radius $\epsilon_\gamma = \frac{1}{a_5} = \min\left\{1, \frac{\sigma_z p_z\xi}{16 b_z}\right\}$ where $\sigma_z,p_z,b_z$ are defined in Assumption \ref{ass:bmsb}, and denote the set of covering $\epsilon_\gamma$-balls to be $\{\mathcal{B}_{i}\}_{i=1}^{\Delta_\gamma}$ (i.e. $\bigcup_{i=1}^{\Delta_\gamma} \mathcal{B}_{i} \supseteq \mathbb{S}_1^F(0)$), where $\Delta_\gamma$ is the number of small $\epsilon_\gamma$-balls required to cover the sphere. Let $\mathcal{M} = \{\gamma_i\}_{i=1}^{\Delta_\gamma}$ such that $\forall\,i\in\{1,\cdots, \Delta_\gamma\}$, $\gamma_i\in\mathbb{S}_1^F(0)\cap\mathcal{B}_i$ be a $\epsilon_\gamma$-mesh of $\mathbb{S}_1^F(0)$. Thus, we have $\forall\, \bar\gamma\in\mathbb{S}^F_1(0)$, $\exists \gamma_i\in\mathcal{M}$ such that $\vv \gamma_i - \bar\gamma\vv_F \leq 2 \epsilon_\gamma$. Then we have:
\begin{equation}\label{eq: covering}
    \Delta_\gamma \leq \Tilde{O}\left((n_xn_z)^{5/2}\right)a_5^{n_xn_z}.
\end{equation}
Secondly, define a stopping time
\begin{equation*}
    L_{i,k}:=\min\left\{m+1,\min\left\{l\geq 1\ :\ \vv\gamma_iz_{km+l}\vv_2\geq a_1\right\}\right\}.
\end{equation*}
Thirdly, $\forall\, t\geq 0$, define two adapted processes $\{h_{i,t}\}_{t\geq 0}$ and $\{h_{i,t}\}_{t\geq 0}$.
\begin{equation*}
    c_{i,t}, h_{i,t} := \arg\max_{H(c,h)\in\{H(c_j,h_j)\}_{j=1}^{\mathcal{X}}} h^\top (\gamma_i z_t).
\end{equation*}

Next, we cover the event $\mathcal{E}_1\cap\mathcal{E}_2$ with sub-events with respect to the points in the $\frac{1}{a_5}$-net of $\mathbb{S}_1^F(0)$. Namely, we have the following claim.

\noindent
\begin{claim}\label{claim4_tl}
    $\forall\, i\in\{1,\cdots, \Delta_\gamma\}$, define
\begin{align*}
    \mathcal{E}_{1,i}:=&\left\{\exists\, \gamma\in\Gamma_T\ \text{such that}\ \forall\, k\in\left\{0,\cdots, \left\lceil \frac{T}{m}\right\rceil -1\right\},\right.\\
    &\left.h^\top_{i,km+L_{i,k}}(\gamma z_{km+L_{i,k}})\geq\frac{a_1\delta\xi}{4}\right\}\cap\mathcal{E}_2.
\end{align*}
Then
\begin{equation}\label{eq: events covered_tl}
\mathcal{E}_1\cap\mathcal{E}_2\subseteq \bigcup_{i=1}^{\Delta_\gamma}\mathcal{E}_{1,i}.
\end{equation}
Thus,
\begin{equation}\label{ineq: claim11_tl}
     \mathbb{P}(\mathcal{E}_1\cap\mathcal{E}_2)\leq \Tilde{O}\left((n_xn_z)^{5/2}\right)a_5^{n_xn_z}\max_{1\leq i\leq \Delta_\gamma}\mathbb{P}(\mathcal{E}_{1,i}).
\end{equation}
\end{claim}

\noindent
\begin{proof}[Proof of Claim \ref{claim4_tl}]
We first prove \eqref{eq: events covered_tl} by showing that the event $\mathcal{E}_1\cap\mathcal{E}_2$ implies $\bigcup_{i=1}^{\Delta_\gamma}\mathcal{E}_{1,i}$. We then prove \eqref{ineq: claim11_tl} by the sub-additivity of the probability measure.

To prove \eqref{eq: events covered_tl}, given $\mathcal{E}_1\cap\mathcal{E}_2$ holds, i.e. $\exists\,\gamma\in\Gamma_T$ such that $\vv\gamma\vv_F\geq\frac{\delta}{2}$, and $\forall\,k\in\left\{0,1,\cdots,\left\lceil\frac{T}{m}\right\rceil - 1\right\}$, $\frac{1}{m}\sum_{s=1}^mz_{km+s}z^\top_{km+s}\succeq a_1^2 I_{n_z}$. Consider unit directional vector of $\gamma$ denoted by $\bar{\gamma}$:
\begin{equation*}
    \bar\gamma := \frac{\gamma}{\vv\gamma\vv_F} \in\mathbb{S}_1^F(0).
\end{equation*}
By the small ball covering theory (\cite{c10}, \cite{c11} and Appendix D.1 in \cite{li2024setmembership}), $\bar\gamma$ must be close to some point in the $\frac{1}{a_5}$-mesh, i.e. $\exists\, \gamma_i\in\mathcal{M}$ such that $\vv\bar\gamma-\gamma_i\vv_F\leq\frac{2}{a_5}$. Meanwhile, since $\forall\,k\in\left\{0,1,\cdots,\left\lceil\frac{T}{m}\right\rceil - 1\right\}$, $\frac{1}{m}\sum_{s=1}^mz_{km+s}z^\top_{km+s}\succeq a_1^2 I_{n_z}$, by multiplying $\gamma_i$ on the left, multiplying $\gamma_i^\top$ on the right, and then taking matrix traces, we have
\begin{equation*}
    \frac{1}{m}\sum_{s=1}^m \vv\gamma_i z_{km+s}\vv_2^2\geq a_1^2.
\end{equation*}
Therefore, 
by the Pigeonhole Principle, for any $i,k$, there  exists $s\in\{1,\cdots,m\}$ such that $\vv\gamma_i z_{km+s}\vv_2\geq a_1$. Notice that the least of such $s$ is indeed the stopping time $L_{i,k}$. This also implies that $\forall\,i,k$, $L_{i,k}\leq m$.
It follows that
\begin{align}\label{ineq:claim1_tl}
    &h^\top_{i,km+L_{i,k}}(\bar\gamma z_{km+L_{i,k}})\nonumber\\
    =&h^\top_{i,km+L_{i,k}}\left\{\left[\gamma_i - (\gamma_i -\bar\gamma)\right]z_{km+L_{i,k}}\right\}\nonumber\\
    =&h^\top_{i,km+L_{i,k}}(\gamma_i z_{km+L_{i,k}}) - h^\top_{i,km+L_{i,k}}\left[(\gamma_i-\bar\gamma)z_{km+L_{i,k}}\right]\nonumber\\
    \geq&a_1\xi - \vv\gamma_i -\bar\gamma\vv_2\cdot\vv z_{km+L_{i,k}}\vv_2\nonumber\\
    \geq&a_1\xi - \frac{2b_z}{a_5}\nonumber\\
    \geq&\frac{a_1\xi}{2}.
\end{align}
Recall that we can rewrite $\gamma$ as its norm multiplying its unit directional vector, i.e., $\gamma = \vv\gamma\vv_F\bar\gamma$. By multiplying \eqref{ineq:claim1_tl} by $\vv\gamma\vv_F$, and by that $\vv\gamma\vv_F \geq \frac{\delta}{2}$, we obtain that there exists $i\in\{1,\cdots, \Delta_\gamma\}$ such that $\forall k\in\left\{0,\cdots, \left\lceil\frac{T}{m}\right\rceil-1\right\}$
\begin{equation}\label{eq:claim1_tl}
    h^\top_{i,km+L_{i,k}}(\gamma z_{km+L_{i,k}})\geq\frac{a_1\delta\xi}{4}.
\end{equation}
It follows that $\mathcal{E}_1\cap\mathcal{E}_2$ implies $\bigcup_{i=1}^{\Delta_\gamma}\mathcal{E}_{1,i}$, which is \eqref{eq: events covered_tl}. 


Secondly, we prove \eqref{ineq: claim11_tl}. Since we have $\mathcal{E}_1\cap\mathcal{E}_2\subseteq \bigcup_{i=1}^{\Delta_\gamma}\mathcal{E}_{1,i}$, then, by the sub-additivity of the probability measure, we have
\begin{equation*}
    \mathbb{P}\left(\mathcal{E}_1\cap\mathcal{E}_2\right) \leq \sum_{i=1}^{\Delta_\gamma}\mathbb{P}\left(\mathcal{E}_{1,i}\right)\leq \Delta_\gamma\max_{i\in\{1,\cdots,\Delta_\gamma\}}\mathbb{P}(\mathcal{E}_{1,i}).
\end{equation*}
By \eqref{eq: covering}, the above inequality implies \eqref{ineq: claim11_tl}, which concludes this proof.
\end{proof}

Based on Claim \ref{claim4_tl}, we aim to find a uniform upper bound for $\left\{\mathbb{P}\left(\mathcal{E}_{1,i}\right)\right\}_{i=1}^{\Delta_\gamma}$. To address this problem, we provide a temporal discretization with respect to the $k$ time segments with length $m$.

\noindent\textbf{Step 3: Discretization on time.} Specifically, we partition the time $[0,T]$ into segments with length $m$, and we construct one event per time segment such that the intersection of all these events form a superset of $\mathcal{E}_{1,i}$. We make the following claim.
\begin{claim}\label{claim5_tl}
    $\forall\, i\in\{1,\cdots,\Delta_\gamma\}, k\in\left\{0,\cdots,\left\lceil \frac{T}{m}\right\rceil - 1\right\}$, define
\begin{equation*}
    G_{i,k}:=\left\{h^\top_{i,km+L_{i,k}}(w_{km+L_{i,k}} -c_{i,km+L_{i,k}})\geq\frac{a_1\delta\xi}{4}\right\}\cap\mathcal{E}_2.
\end{equation*}
Then $\forall\, i\in\{1,\cdots,\Delta_\gamma\}$, we have
    \begin{equation}\label{eq: claim12_tl}
    \mathcal{E}_{1,i}\subseteq \bigcap_{k=0}^{\left\lceil \frac{T}{m}\right\rceil - 1}G_{i,k}.
    \end{equation}
\end{claim}

\noindent
\begin{proof}[Proof of Claim \ref{claim5_tl}]
    Recall that by the algorithm \eqref{equ: SME algo}, and Assumption \ref{ass: tbtl}, $\forall\, t\geq 0$,
\begin{equation*}
    w_t-\gamma z_t = x_{t+1}-\hat\theta z_t\in\mathbb{W}\subseteq\bigcap_{j=1}^{\mathcal{X}}H(c_j,h_j).
\end{equation*}
Therefore, $\forall (c,h)\in\{(c_j,h_j)\}_{j=1}^{\mathcal{X}}$, $h^\top(w_t - \gamma z_t)\geq g^\top c$. Thus, we have
\begin{equation*}
    h^\top_{i,km+L_{i,k}}(w_{km+L_{i,k}} -\gamma z_{km+L_{i,k}} - c_{i,km+L_{i,k}})\geq 0
\end{equation*}
It follows that, when $\mathcal{E}_{1,i}$ holds, by \eqref{eq:claim1_tl}, we have
\begin{align}\label{ineq: claim2_tl}
    &h^\top_{i,km+L_{i,k}}(w_{km+L_{i,k}}-c_{i,km+L_{i,k}})\nonumber\\
    \geq &h^\top_{i,km+L_{i,k}}(\gamma z_{km+L_{i,k}})\nonumber\\
    \geq &\frac{a_1\delta\xi}{4}.
\end{align}
Recall that the events $\{G_{i,k}\}_{k=0}^{\lceil\frac{T}{m}\rceil-1}$ are defined in in the following manner:
\begin{equation*}
    \left\{h^\top_{i,km+L_{i,k}}(w_{km+L_{i,k}} -c_{i,km+L_{i,k}})\geq\frac{a_1\delta\xi}{4}\right\}\cap\mathcal{E}_2.
\end{equation*}
Then, we can deduce that $\forall\, i\in\{1,\cdots,\Delta_\gamma\}$,
\begin{equation*}
    \mathcal{E}_{1,i}\subseteq \bigcap_{k=0}^{\left\lceil \frac{T}{m}\right\rceil - 1} G_{i,k}.
\end{equation*}
\end{proof}
Next, we aim to find an upper bound for the right hand side of \eqref{eq: claim12_tl}. We make the following claim.
\noindent
\begin{claim}\label{claim6_tl}
    For any $i\in\{1,\cdots, \Delta_\gamma\}$:
\begin{equation*}
    \mathbb{P}\left(\bigcap_{k=0}^{\left\lceil \frac{T}{m}\right\rceil - 1} G_{i,k}\right)\leq \left(1 - p_w\left(\frac{a_1\delta\xi}{4}\right)\right)^{\left\lceil\frac{T}{m}\right\rceil - 1}.
\end{equation*}
\end{claim}

\begin{proof}[Proof of Claim \ref{claim6_tl}]
    By Bayes' Theorem, $\forall\, i$:
\begin{equation*}
    \mathbb{P}\left(\bigcap_{k=0}^{\left\lceil \frac{T}{m}\right\rceil - 1} G_{i,k}\right) = \mathbb{P}\left(G_{i,0}\right)\displaystyle\prod_{k=1}^{\left\lceil \frac{T}{m}\right\rceil - 1}\mathbb{P}\left(G_{i,k}\middle| \bigcap_{\ell=0}^{k-1}G_{i,\ell}\right).
\end{equation*}

Recall that $\mathcal{E}_2\subseteq \left\{1\leq L_{i,k} \leq m,\ \forall\,i,k\right\}$. Hence, $G_{i,k}\subseteq \left\{h^\top_{i,km+L_{i,k}}(w_{km+L_{i,k}} -c_{i,km+L_{i,k}})\geq\frac{a_1\delta\xi}{4}\right\}\cap\left\{1\leq L_{i,k} \leq m\right\}$. For any $k\in\left\{2,3,\cdots,\left\lceil\frac{T}{m}\right\rceil - 1\right\}$, we have

\begin{align*}
    &\mathbb{P}\left(G_{i,k}\middle|\bigcap_{j=0}^{k-1}G_{i,j}\right)\\
    \leq&\mathbb{P}\left(h^\top_{i,km+L_{i,k}}(w_{km+L_{i,k}} -c_{i,km+L_{i,k}})\geq\frac{a_1\delta\xi}{4};\right.\\
    &\left. 1\leq L_{i,k}\leq m\middle| \bigcap_{j=0}^{k-1}G_{i,j}\right)\\
    =&\sum_{l=1}^m\!\mathbb{P}\!\left(\!h^\top_{i,km+L_{i,k}}(w_{km+L_{i,k}} -c_{i,km+L_{i,k}})\geq\frac{a_1\delta\xi}{4};\right.\\
    &\left.L_{i,k}=l \middle| \bigcap_{j=0}^{k-1}G_{i,j}\right)\\
    = &\sum_{l=1}^m\mathbb{P}\left(h^\top_{i,km+L_{i,k}}(w_{km+L_{i,k}} -c_{i,km+L_{i,k}})\geq\frac{a_1\delta\xi}{4}\middle|\right.\\
    &\left. L_{i,k}=l, \bigcap_{j=0}^{k-1}G_{i,j}\right) \times\mathbb{P}\left(L_{i,k}=l\middle| \bigcap_{j=0}^{k-1}G_{i,j}\right).
\end{align*}
The first equation is deduced by the law of total probability, and the second equation is deduced by Bayes' theorem. Meanwhile, notice that
\begin{align*}
    &\mathbb{P}\left(h^\top_{i,km+L_{i,k}}(w_{km+L_{i,k}} -c_{i,km+L_{i,k}})\geq\frac{a_1\delta\xi}{4}\right.\\
    &\left.\middle| L_{i,k}=l, \bigcap_{j=0}^{k-1}G_{i,j}\right)\\
    \overset{(f)}{=}&\int_{h_{0:km+l}}\mathbb{P}\left(h^\top_{i,km+L_{i,k}}(w_{km+L_{i,k}} -c_{i,km+L_{i,k}})\geq\frac{a_1\delta\xi}{4},\right.\\
    &\left.w_{0:km+l}=h_{0:km+l}\middle| L_{i,k}=l,\bigcap_{j=0}^{k-1}G_{i,j}\right)dh_{0:km+l}\\
    \overset{(g)}{=}&\int_{Q_{k,l}}\!\!\mathbb{P}\!\left(\!h^\top_{i,km+L_{i,k}}\!(w_{km+L_{i,k}}\! -\!c_{i,km+L_{i,k}})\!\geq\!\frac{a_1\delta\xi}{4}\!\middle|\! w_{0:km+l}\right.\\
    &\left.=h_{0:km+l}\right)\!\cdot\! \mathbb{P}\!\left(\!w_{0:km+l}=h_{0:km+l}\!\middle|\! L_{i,k}=l\!,\!\bigcap_{j=0}^{k-1}G_{i,j}\!\right)\\
    &dh_{0:km+l}\\
    \leq&\left(1-p_w\left(\frac{a_1\delta\xi}{4}\right)\right)\times\\
    &\int_{Q_{k,l}}\mathbb{P}\left(w_{0:km+l}=h_{0:km+l}\middle| L_{i,k}=l,\bigcap_{j=0}^{k-1}G_{i,j}\right)dh_{0:km+l}\\
    =&1-p_w\left(\frac{a_1\delta\xi}{4}\right),
\end{align*}
where (f) is deduced by the law of total probability, and (g) is deduced by Bayes' theorem. Specially, we denote a sequence of noise (or its realization) by
\begin{equation*}
    w_{0:km+l}:=\{w_0,w_1,\cdots, w_{km+l-1}\},
\end{equation*}
and the domain of integration by
\begin{align*}
    Q_{k,l}:=&\{h_{0:km+l}\text{ such that }w_{0:km+l}=h_{0:km+l}\text{ satisfies }\\
    &L_{i,k}=l\text{ and }\bigcap_{j=1}^{k-1}G_{i,j}\}.
\end{align*}
This follows that
\begin{align*}
    &\mathbb{P}\left(G_{i,k}\middle|\bigcap_{j=0}^{k-1}G_{i,j}\right)\\
    \leq&\left(1-p_w\left(\frac{a_1\delta\xi}{4}\right)\right)\sum_{l=1}^m\mathbb{P}\left(L_{i,k}=l\middle|\bigcap_{j=0}^{k-1}G_{i,j}\right)\\
    =&1-p_w\left(\frac{a_1\delta\xi}{4}\right).
\end{align*}
By a similar procedure, we can also deduce that
\begin{equation*}
    \mathbb{P}\left(G_{i,0}\right) \leq 1-p_w\left(\frac{a_1\delta\xi}{4}\right).
\end{equation*}
It follows that 
\begin{equation}\label{ineq: claim22_tl}
    \mathbb{P}\left(\bigcap_{k=0}^{\left\lceil \frac{T}{m}\right\rceil - 1} G_{i,k}\right)\leq \left(1 - p_w\left(\frac{a_1\delta\xi}{4}\right)\right)^{\left\lceil\frac{T}{m}\right\rceil - 1}.
\end{equation}
\end{proof}

\noindent
\textbf{Step 4: The final assembly.}
Combining \eqref{eq: events covered_tl}, \eqref{ineq: claim11_tl} and \eqref{ineq: claim22_tl}, we can prove Lemma \ref{lm3_tl}. Combining Lemma \ref{lm3_tl} with Lemma 4.1 in \cite{li2024setmembership}, we can prove Theorem \ref{thm2}.

\subsection{Proof of the strict positivity of $\xi$}\label{app: xi}
Recall that in the statement of Theorem \ref{thm2}, it is claimed that $\xi$ is strictly positive. Meanwhile, the proof of Theorem \ref{thm2} in Appendix \ref{app: thm2} also requires $\xi>0$. We present proofs of the strict positivity of $\xi$ under both circumstances: (i) there are finitely many SHS, and (ii) there are inifinitely many SHS.
\subsubsection{$\mathcal{H}$ is finite}
Recall that $\xi = \min_{\vv x\vv_2 = 1}\max_{h\in\mathcal{H}}h^\top x$. We show  $\forall\, x\in\mathbb{S}_2$, $\max_{h\in\mathcal{H}}h^\top x >0$ by contradiction. Suppose $\exists x\in\mathbb{S}_2$ such that $\forall\, h\in\mathcal{H}$, $h^\top x\leq 0$. It follows that $\forall w\in\W,\ n>0$, $w-nx\in\W$. This result contradicts the assumption that $\W$ is compact. Hence, $\xi$ must be strictly positive.

\subsubsection{$\mathcal{H}$ is infinite}
In this case, recall that $\xi = \min_{\vv x\vv_2 =1}\sup_{h\in\mathcal{H}}h^\top x$. We prove by contradiction. $\forall\epsilon>0$, suppose that $\exists \{x_n\}_{n=1}^\infty$ such that $\vv x\vv_2 = 1$ and $\forall h\in\mathcal{H}, n\geq 1$, $h^\top x_n \leq \frac{\epsilon}{n}$. Let $w$ be an interior point of $\W$. Then $\forall n\geq 1$, $w + n x_n \in\W$. This result contradicts the assumption that $\W$ is compact. Hence, $\xi$ must be strictly positive.

\subsection{Proof of Corollary \ref{cor: qw=O(epsilon)1} and Corollary \ref{cor: qw=O(epsilon)}}\label{app: cor}
First, we prove Corollary \ref{cor: qw=O(epsilon)1}. Recall that in this case we have $\xi = 1$. We first claim that
$    \text{Term 1} \leq \epsilon
$. To show this, notice that
    $m \geq O(n_z+\log T - \log\epsilon) =\frac{1}{a_3} \left[O((\log a_2)n_z + \frac{5}{2}\log n_z + O (\log T) - O(\log\epsilon)\right]$. It follows that $\exp(-a_3m) \leq \frac{a_2^{-n_z}n_z^{-5/2}\epsilon}{T}$. Therefore, $\text{Term 1} \leq \epsilon/m\leq \epsilon$. Next we let $\text{Term 2} = \epsilon$. Then $\epsilon = \tilde{O}\left((n_xn_z)^{5/2}\right)\tilde{a}_4^{n_xn_z}\left[1 - q_w\left(\frac{a_1\delta}{4}\right)\right]^{\left\lceil\frac{T}{m}\right\rceil - 1}$. Given $q_w(\epsilon) = O(\epsilon^p)$, we have:
\begin{align*}
    \delta^p &= O\left(\left(\frac{4}{a_1}\right)^p\right)\left\{1 - \left[\epsilon\tilde{O}\left((n_xn_z)^{5/2}\right)a_4^{n_xn_z}\right]^{\frac{1}{\left\lceil \frac{T}{m}\right\rceil - 1}}\right\}\\
    &\leq O\left(\left(\frac{4}{a_1}\right)^p\right)\left\{1 - \left[\epsilon\tilde{O}\left((n_xn_z)^{5/2}\right)a_4^{n_xn_z}\right]^{\frac{T}{m}}\right\}\\
    &\overset{(b)}{\leq} O\left(\left(\frac{4}{a_1}\right)^p\right)\cdot \frac{m}{T}\cdot\tilde{O}(n_xn_z)=\tilde{O}\left(\left(\frac{4}{a_1}\right)^p\cdot\frac{n_xn_z}{T}\right).
\end{align*}
Inequality (b) is deduced from the fact that $\forall\,x>0,\ x-1\geq\log x$. It follows that $\delta \leq \tilde{O}\left(\left(\frac{n_xn_z}{T}\right)^{\frac{1}{p}}\right)$. Then, we prove Corollary \ref{cor: qw=O(epsilon)1}.

Next, we show Corollary with the same reasoning, and by replacing $a_4$ with $a_5$, $\delta$ with $\xi\delta$, and $q_w(\epsilon)$ with $p_w(\epsilon)$.

\subsection{Proof of Examples \ref{eg1_1}, \ref{eg2_1}, and \ref{eg1}}\label{box}

The calculation of the corresponding boundary-visiting probability bounds below (i.e., $q_w(\epsilon)$ and $p_w(\epsilon)$) below requires some computations of general hyperspherical caps. We refer the reader to \cite{concise2011formulas}.

For the weighted $\ell_\infty$ ball $\W = \left\{w\in\mathbb{R}^{n_x}\,:\, \max_{1\leq i\leq n_x} \left\{\frac{|w_i|}{a_i}\right\}\leq 1\right\}$, and $w_t$ uniformly distributed on $\W$, the probability density of $w_t$ on $\W$ is $f_w(x) = \frac{1}{2^{n_x}\prod_{i=1}^{n_x}a_i}$.

\noindent\textbf{Example \ref{eg1_1}.}
In this case, we consider any vertex $w_v$ of $\W$, the probability that $w_t$ visits $\B_2^\epsilon(w_v)\cap\W$ is $\frac{1}{2^{n_x}}\cdot\frac{\pi^{\!\frac{n}{2}}\epsilon^{n_x}}{\Gamma\left(\frac{n}{2}+1\right)} \cdot \frac{1}{2^{n_x}\prod_{i=1}^{n_x}a_i}\sim O(\epsilon^{n_x})$.

\vspace{3pt}

\noindent\textbf{Example \ref{eg1}, weighted $\mathbf{\ell_\infty}$ ball.} To obtain the best theoretical guarantee for the convergence rate, we can choose one non-extreme point on each facet of $\W$ as $c$, and the unit normal vector of the corresponding facet at $c$ as $h$. The $\epsilon$-slice induced by $H(c,h)$ can be then written as $[-a_1,a_1]\times \cdots [a_i - \epsilon, a_i]\times [-a_{i+1},a_{i+1}]\times [-a_{n_x}, a_{n_x}]$ for some $i\in\{1,\cdots, n_x\}$ (or with $[-a_i, -a_i + \epsilon]$). The probability that $w_t$ visits this $\epsilon$-slice is $\frac{\epsilon\prod_{j\neq i}2a_j}{2^{n_x}\prod_{j=1}^{n_x}a_j} = \frac{\epsilon}{2 a_i}\sim O(\epsilon)$. The according projection constant is $\xi = \frac{1}{n_x}$. Therefore, by Corollary \ref{cor: qw=O(epsilon)}, the convergence rate is $\tilde{O}\left(\frac{n_x^{3/2}n_z}{T}\right)$.

For the weighted $\ell_1$ ball $\W = \left\{w\in\mathbb{R}^{n_x}\ :\ \sum_{i=1}^{n_x}\frac{|w_i|}{a_i} \leq 1\right\}$, and $w_t$ uniformly distributed on $\W$, the probability density of $w_t$ on $\W$ is $f_w(x) = \frac{n!}{2^n \prod_{i=1}^{n_x}a_i}$.

\noindent\textbf{Example \ref{eg2_1}.} In this case, the least intersection of a boundary $\epsilon$-ball with $\W$ is on one of the vertices of $\W$. Figure \ref{fig: sectors} demonstrates such an $\epsilon$-ball when $n_x=2$.
\begin{figure}[!htb]
    \centering
    \includegraphics[width=0.8\linewidth]{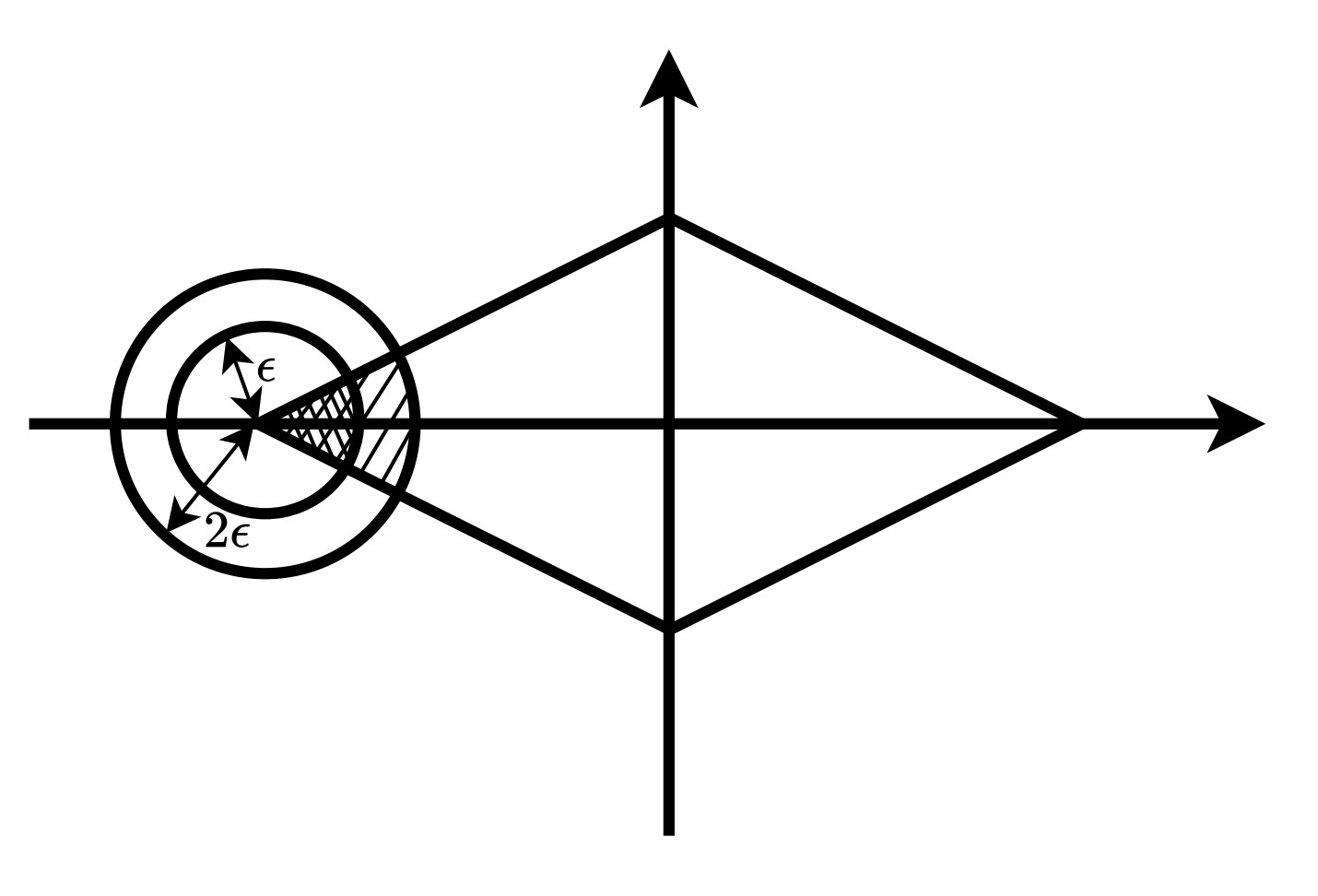}
    \caption{The greater shaded sector represents the intersection of the $2\epsilon$-ball with $\W$, while the smaller doubly shaded sector represents the intersection of the $\epsilon$-ball with $\W$.}
    \label{fig: sectors}
\end{figure}
Denote the doubly shaded cone (sector) by $S_\epsilon$. Notice that when we replace $\epsilon$ with $2\epsilon$, the intersection area increases to $2^{n_x}S_\epsilon$. It follows that $q_w(\epsilon) = O\left(\epsilon^{n_x}\right)$.

\noindent\textbf{Example \ref{eg1}, weighted $\mathbf{\ell_1}$ ball.} This is similar to the weighted $\ell_\infty$ ball case. The best choice of $(c,h)$ is: choose exactly on extreme point of each facet of $\W$ as $c$, and its according normal vector as $h$. Any $\epsilon$-slice can be viewed as the intersection of $\W$ and the slab generated by translating a supporting hyperplane by $\epsilon$. Due to the symmetric and parallel property of $\W$, every such $\epsilon$-slice carries a probability measure of $O(\epsilon)$.

\subsection{Proof of Example \ref{eg3_1} and Example \ref{eg3}}\label{2norm}
Recall that the $\ell_2$ ball is $\W = \left\{w\in\mathbb{R}^{n_x}\ :\ \vv w\vv_2 \leq 1\right\}$, and $w_t$ is uniformly distributed on $\W$. The probability density of $w_t$ on $\W$ is $f_w(x) = \frac{\Gamma\left(\frac{n_x}{2}+1\right)}{\pi^{\frac{n_x}{2}}}$.

\noindent\textbf{Example \ref{eg3_1}.} The volume of the intersection of $\W$ and any $\epsilon$-ball is
\begin{equation*}
    V_{\text{ball}}(\epsilon) = \frac{\pi^{n_x}}{\Gamma\left(\frac{n_x+1}{2}\right)}\left(\int_0^{T_1}\sin^{n_x}\theta d\theta + \epsilon^{n_x}\int_0^{T_2}\sin^{n_x}\theta d\theta\right),
\end{equation*}
where $T_1 = \arccos\left(1-\frac{\epsilon^2}{2}\right)$, $T_2 = \arccos\left(\frac{\epsilon}{2}\right)$. When $\epsilon$ is small, $V_{\text{ball}}(\epsilon)\sim O\left(\epsilon^{n_x}\right)$. It follows that $q_w(\epsilon) = O(\epsilon^{n_x})$.

\noindent\textbf{Example \ref{eg3}.} The volume of the $\epsilon$-slice of $\W$ at any boundary point of $\W$ is
\begin{equation*}
    V_{\text{slice}} = \frac{\pi^{n_x}}{\Gamma\left(\frac{n_x+1}{2}\right)}\int_0^{T_3}\sin^{n_x}\theta d\theta,
\end{equation*}
where $T_3 = \arccos(1-\epsilon)$. When $\epsilon$ is small enough, we have
$V_{\text{slice}}(\epsilon)\sim O\left(\epsilon^{\frac{n_x+1}{2}}\right)$. It follows that $p_w(\epsilon) = O\left(\epsilon^{\frac{n_x+1}{2}}\right)$.

The above calculations of the volumes of hypersphereical caps follow from the formulas provided in \cite{concise2011formulas}.

\subsection{More discussions on Assumption \ref{ass: tbtl}}\label{app: epsilon}
In the discussion on Assumption \ref{ass: tbtl}, it is claimed that for the same $\epsilon$ value and the same boundary point of $\W$, the $\epsilon$-ball is a subset of any of its $\epsilon$-slice. We hereby present a proof for this statement.

Let $\W\in\mathbb{R}^n$ be a convex and compact set, and let $w^0$ be a boundary point of $\W$. Let $\{x\in\mathbb{R}^n\ :\ h^\top x\geq h^\top w^0\}$ be a supporting half-space of $\W$ at $w^0$. For any fixed $\epsilon >0$, consider the $\epsilon$-ball centered at $w^0$:
\begin{equation*}
    \B_\epsilon^2(w^0) = \{x\in\mathbb{R}^n\ :\ \vv x - w^0\vv_2 \leq \epsilon\},
\end{equation*}
and the $\epsilon$-slice induced by $(w^0, h)$:
\begin{equation*}
    S_{\W}^\epsilon(w^0, h) := \{x\in\mathbb{R}^n\ : h^\top w^0 + \epsilon \geq h^\top x \geq h^\top w^0\}.
\end{equation*}
Then, we show that $\B_\epsilon^2(w^0)\cap\W \subseteq S_{\W}^\epsilon(w^0, h)\cap\W$. To see this, recall that $\forall x\in \W$, we have $h^\top x \geq h^\top x^0$ since $S_{\W}^\epsilon(w^0, h)$ is a SHS of $\W$. Therefore, it suffices to show that $\B_\epsilon^2(w^0)\subseteq\{x\in\mathbb{R}^n\ :\ h^\top w^0 + \epsilon \geq h^\top x\}$. $\forall\, x\in \B_\epsilon^2(w^0)$, we have $\vv x - w^0\vv_2 \leq \epsilon$. By the Cauchy-Schwartz inequality, we have
\begin{equation*}
    h^\top(x - w^0) \leq \vv h\vv_2\cdot\vv x-w^0\vv_2\leq 1\cdot\epsilon= \epsilon.
\end{equation*}

\end{document}